

\documentclass[12pt]{amsart}


\usepackage[margin = 1in]{geometry}
\usepackage{amsfonts, amsmath,amscd, amssymb,
latexsym, mathrsfs, mathtools, 
slashed, stmaryrd, verbatim, wasysym, bbm, url}
\usepackage[usenames]{xcolor}
\usepackage{enumerate}
\usepackage[shortlabels]{enumitem}
\usepackage[utf8]{inputenc}

\usepackage[backend=bibtex,bibstyle=authoryear,citestyle=alphabetic,firstinits=true,doi=false,isbn=false,url=false,backref]{biblatex}
\addbibresource{references.bib}

\DeclareFieldFormat{labelalphawidth}{\mkbibbrackets{#1}}

\defbibenvironment{bibliography}
  {\list
     {\printtext[labelalphawidth]{%
        \printfield{prefixnumber}%
    \printfield{labelalpha}%
        \printfield{extraalpha}}}
     {\setlength{\labelwidth}{\labelalphawidth}%
      \setlength{\leftmargin}{\labelwidth}%
      \setlength{\labelsep}{\biblabelsep}%
      \addtolength{\leftmargin}{\labelsep}%
      \setlength{\itemsep}{\bibitemsep}%
      \setlength{\parsep}{\bibparsep}}%
      }
  {\endlist}
  {\item}

\newtheorem*{acknowledgements}{Acknowledgements}
\newtheorem*{theorem*}{Theorem}
\newtheorem{theorem}{Theorem}
\newtheorem{corollary}{Corollary}

\newtheorem{lemma}[corollary]{Lemma}

\theoremstyle{definition}
\newtheorem{definition}{Definition}
\newtheorem{example}{Example}
\newtheorem{remark}[example]{Remark}

\numberwithin{equation}{section}

\let\oldsqrt\sqrt
\def\sqrt{\mathpalette\DHLhksqrt}
\def\DHLhksqrt#1#2{%
\setbox0=\hbox{$#1\oldsqrt{#2\,}$}\dimen0=\ht0
\advance\dimen0-0.2\ht0
\setbox2=\hbox{\vrule height\ht0 depth -\dimen0}%
{\box0\lower0.4pt\box2}}


\usepackage{verbatim} 
\DeclareFontFamily{U}{mathx}{\hyphenchar\font45}
\DeclareFontShape{U}{mathx}{m}{n}{
      <5> <6> <7> <8> <9> <10>
      <10.95> <12> <14.4> <17.28> <20.74> <24.88>
      mathx10
      }{}
\DeclareSymbolFont{mathx}{U}{mathx}{m}{n}
\DeclareFontSubstitution{U}{mathx}{m}{n}
\DeclareMathAccent{\widecheck}{0}{mathx}{"71}


\renewcommand{\tilde}[1]{\widetilde{#1}}



\newcommand\eps\varepsilon
\renewcommand\epsilon\varepsilon





\newcommand{\abs}[1]{\left\lvert #1 \right\rvert}

\newcommand\inner[1]{\langle #1 \rangle}





\newcommand\loc{\text{loc}}

\newcommand\Mand{\text{ and }}

\newcommand\paperintro%
        {%
         }
\newcommand\paperbody%
        {%
         }


\newcommand\bbG{\mathbb{G}}
\newcommand\bbH{\mathbb{H}}

\newcommand\bbR{\mathbb{R}}

\newcommand\bbZ{\mathbb{Z}}

\newcommand\cA{\mathcal{A}}

\newcommand\cD{\mathcal{D}}
\newcommand\cE{\mathcal{E}}

\newcommand\cO{\mathcal{O}}

\newcommand\mf[1]{\mathfrak{ #1}}


\DeclareMathAlphabet{\mathpzc}{OT1}{pzc}{m}{it}

\hyphenation{mezzo-perversities mezzo-perversity}

\hyphenation{di-men-sion-al}


\newcommand{\sbs}{\subset}

\DeclareMathOperator{\Div}{div}

\usepackage[refpage]{nomencl}


\makenomenclature

\usepackage{tikz, tikz-cd, tkz-euclide}
\usetikzlibrary{calc,positioning,intersections,quotes,decorations.markings}
\makeatletter
\def\@tocline#1#2#3#4#5#6#7{\relax
  \ifnum #1>\c@tocdepth 
  \else
    \par \addpenalty\@secpenalty\addvspace{#2}%
    \begingroup \hyphenpenalty\@M
    \@ifempty{#4}{%
      \@tempdima\csname r@tocindent\number#1\endcsname\relax
    }{%
      \@tempdima#4\relax
    }%
    \parindent\z@ \leftskip#3\relax \advance\leftskip\@tempdima\relax
    \rightskip\@pnumwidth plus4em \parfillskip-\@pnumwidth
    #5\leavevmode\hskip-\@tempdima
      \ifcase #1
       \or\or \hskip 1em \or \hskip 2em \else \hskip 3em \fi%
      #6\nobreak\relax
    \hfill\hbox to\@pnumwidth{\@tocpagenum{#7}}\par
    \nobreak
    \endgroup
  \fi}
\makeatother

\def\annu#1{_{%
  \vbox{\hrule height .2pt 
    \kern 1pt 
    \hbox{$\scriptstyle {#1}\kern 1pt$}%
  }\kern-.05pt 
  \vrule width .2pt 
}}

\allowdisplaybreaks
\linespread{1.15}
\medmuskip=1\medmuskip

\usepackage[colorlinks,citecolor=blue,urlcolor=red,bookmarks=false,hypertexnames=true]{hyperref} 

\title[Super-Poincar\'e inequalities for stratified Lie groups]{Some super-Poincar\'e inequalities for gaussian-like measures on stratified Lie groups}
\author{Yaozhong Qiu}
\address{Department of Mathematics, Imperial College London, 180 Queen’s 
Gate, London, SW7 2AZ, United Kingdom}
\email{y.qiu20@imperial.ac.uk}

\begin{document}
\begin{abstract}
We continue the $U$-bound program initiated in \cite{HZ09} and prove super-Poincar\'e inequalities for a class of subelliptic probability measures defined on M\'etivier groups, the main ingredient in the proof being a Hardy-type inequality. In doing so, we recover and extend some previous results from the probabilistic viewpoint.
\end{abstract}

\maketitle

\section{Introduction and main results}
In this paper, we continue the $U$-bound program developed in \cite{HZ09} and prove super-Poincar\'e inequalities for a class of subelliptic probability measures $d\mu = Z^{-1}e^{-N^p}d\xi$ where $N$ is a homogeneous norm on a stratified Lie group $\bbG$, specifically a M\'etivier group, with explicit estimates, the main ingredient in the proof being a Hardy-type inequality which, to our best knowledge, has not yet appeared in the literature. These measures can be regarded as being analogous to euclidean gaussian-like measures when $N = \abs{x}$ is the euclidean norm on $\bbG = (\bbR^m, +)$, and so our result can be regarded as a subelliptic analogue of the super-Poincar\'e inequality for euclidean superexponential measures. 

If $(X, \cA, \mu)$ is a probability space and $L$ is a selfadjoint operator on $L^2(\mu)$ generating a Markov semigroup $(P_t)_{t \geq 0}$, the super-Poincar\'e inequality (also known as a generalised Nash inequality in \cite[\S7.4.1]{BGL14}) introduced in \cite{Wan00}, is the family of inequalities
\begin{equation}\label{spi0}
    \int f^2d\mu \leq \epsilon \cE(f, f) + \beta_2(\epsilon)\left(\int \abs{f}d\mu\right)^2, \quad \epsilon > \epsilon_0 \geq 0
\end{equation}
where $f: X \rightarrow \bbR$ belongs to the domain $\cD(\cE)$ of the Dirichlet form $\cE$ associated to $L$ and $\beta_2: (\epsilon_0, \infty) \rightarrow \bbR$ is a nondecreasing function called the growth function. As is traditional in the theory of functional inequalities, \eqref{spi0} has spectral theoretic content for $L$, relationships and equivalences between other functional inequalities, and ergodicity implications for $(P_t)_{t \geq 0}$. In particular, it was shown in \cite[Theorem~2.1]{Wan00} that \eqref{spi0} is equivalent to the containment $\sigma_{\text{ess}}(-L) \sbs [\epsilon_0^{-1}, \infty)$ where $\sigma_{\text{ess}}(-L)$ denotes the essential spectrum of $-L$, so that if $\epsilon_0 = 0$ then $-L$ has purely discrete spectrum. In \cite[Theorems~3.1,~3.2]{Wan00} an equivalence was established between \eqref{spi0} and the defective $F$-Sobolev inequality
\begin{equation}\label{fsob0}
    \int f^2F(f^2)d\mu \leq c_1\cE(f, f) + c_2, \quad \int f^2d\mu = 1 \Mand c_1, c_2 > 0
\end{equation}
where $F: [0, \infty) \rightarrow \bbR$ is such that $\lim_{r \to \infty} F(r) = \infty$. Two special growth rates are $\beta_2(\epsilon) \sim e^{c/\epsilon}$, which implies the defective (meaning the right hand side contains an additional $L^2$-term) logarithmic Sobolev inequality, see \cite[\S5]{BGL14}, and $\beta_2(\epsilon) \sim e^{c/\epsilon^\sigma}$ for some $\sigma > 1$, which implies the Lata{\l}a-Oleszkiewicz inequality introduced in \cite{latala2000between}, see also \cite[\S7.6.3]{BGL14}. Lastly, it was shown in \cite[Equation~1.6]{Wan00} and \cite[Proposition~11]{barthe2007isoperimetry} that an analogue of the equivalence between the Poincar\'e inequality and exponential decay to equilibrium holds, namely 
\[ \int (P_tf)^2 d\mu \leq e^{-2t/r}\int f^2d\mu + \beta(r)\left(1 - e^{-2t/r}\right)\left(\int \abs{f}d\mu\right)^2. \]
We refer the reader to \cite{Wan00, Wan02, Wan06, wang2008super, cattiaux2009lyapunov, BGL14} for further discussion of the properties and applications of super-Poincar\'e inequalities (and functional inequalities more generally). 

If $X$ in addition is equipped with a metric $d$, then $U$-bounds, first introduced in \cite{HZ09}, are inequalities of the form 
\begin{equation}\label{ubound0}
    \int \eta f^2d\mu \leq A \cE(f, f) + B \int f^2d\mu, \quad A, B > 0
\end{equation}
where $\eta: X \rightarrow \bbR$ is a function, typically exhibiting growth at infinity (as $d(p, x) \rightarrow \infty$ for some basepoint $p$). These bounds were initially introduced in the setting where $\mu$ is a probability on $\bbR^m$ and $\cE(f, f) = \int \abs{\nabla f}^2d\mu$ with $\nabla = (X_1, \cdots, X_\ell)$ a collection of (possibly noncommuting) vector fields. We will sometimes abuse notation and call $\eta$ itself the $U$-bound, if there is otherwise no ambiguity. One motivation for studying $U$-bounds is that there are rather convenient ways to obtain them. For instance, one way is the original method by \cite{HZ09} which consists of clever use of Leibniz rule, integration by parts, and Young's inequality; this method also provides $U$-bounds in $L^q$, for $q \in (1, 2)$, where $f^2$ and $\abs{\nabla f}^2$ are replaced by $\abs{f}^q$ and $\abs{\nabla f}^q$ respectively. A second (in $L^2$) is the simple method of ``expanding the square" as mentioned in \cite[\S2.3]{RZ22}, while a third is through the semigroup subcommutation for multiplication operators developed in \cite{RZ21, Qiu22}.

The first application of $U$-bounds was to a proof of the usual Poincar\'e inequality \cite[Theorem~3.1]{HZ09} 
\[ \int f^2d\mu - \left(\int fd\mu\right)^2 \leq C \cE(f, f), \]
provided there exists a family of compact sets $(\Omega_M)_{M > 0}$ on which we are given a local Poincar\'e inequality and $\inf_{\Omega_M^c} \eta \rightarrow \infty$ as $M \rightarrow \infty$. In practice, this technical condition is satisfied on $\bbR^m$ when $\mu$ is absolutely continuous with respect to Lebesgue measure and $\cE(f, f) = \int \abs{\nabla f}^2d\mu$ with $\nabla$ the euclidean gradient for $\Omega_M$ a ball of radius $M$ and $\eta$ a radial function diverging to infinity. In the same paper, it was shown, given some additional assumptions on $\eta$, that $U$-bounds pass to logarithmic Sobolev inequalities, more generally $F$-Sobolev type inequalities, and weighted versions thereof. Subsequent applications of $U$-bounds, at least in the subelliptic setting of stratified Lie groups, include, for instance, \cite{inglis2011u} for more entropic and isoperimetric type inequalities, and \cite[Theorem~4.19]{inglis2012spectral} for another Poincar\'e inequality via a $U$-bound $\eta$ which does \emph{not} diverge to infinity in all directions. 
The latter is especially important, in that it can be considered as the starting point of the present work. 

Super-Poincar\'e inequalities for some of the measures of interest in this paper also appeared in \cite[Theorem~4.10]{inglis2012spectral}, although the proof was indirect in the sense it exploited the equivalence between super-Poincar\'e and logarithmic Sobolev inequalities, the latter of which was already proven in \cite[Theorem~4.1]{HZ09}. Lastly, let us note $U$-bounds have also successfully been used to study functional inequalities in the infinite-dimensional setting, see, for instance, the series of related works \cite{IP09, papageorgiou2009logarithmic, inglis2011u, papageorgiou2018log, inglis2019log, konstantopoulos2019log, dagher2022spectral}. We refer the reader also to \cite{dagher2021coercive, CFZ22, BDZ21, CFZ20} for more examples in the finite-dimensional setting. 

Let us now specialise some of the notation introduced to the setting of interest. In this paper, the underlying space $X$ is a (particular type of) stratified Lie group $\bbG$. This is a Lie group $(\bbR^m, \circ)$ with composition law $\circ: \bbR^m \times \bbR^m \rightarrow \bbR^m$ and whose Lie algebra $\mf{g}$ of left invariant vector fields admits the decomposition $\mf{g} = \bigoplus_{i=0}^{r-1} \mf{g}_i$ where the $\mf{g}_i$ are linear subspaces of $\mf{g}$ such that $\mf{g}_i = [\mf{g}_0, \mf{g}_{i-1}]$ for $1 \leq r \leq r-1$. There is a canonical basis $\{X_1, \cdots, X_\ell\}$ for $\mf{g}_0$ whose components form the subgradient $\nabla_\bbG = (X_1, \cdots, X_\ell)$ and the sublaplacian $\Delta_\bbG = \nabla_\bbG \cdot \nabla_\bbG = \sum_{i=1}^\ell X_i^2$, analogous to the euclidean gradient and laplacian respectively. An important consequence of this structure is that the subgradient induces a natural metric on $\bbG$ called the Carnot-Carath\'eodory distance $d = d_\bbG$ analogous to the euclidean norm, while the sublaplacian is associated with a second metric $N$ called the Kor\'anyi-Folland gauge through the distributional identity $\Delta_\bbG N^{2-Q(\bbG)} = \delta_0$ where $Q(\bbG) = \sum_{i=0}^{r-1} (i+1) \dim(\mf{g}_i)$ is the homogeneous dimension of $\bbG$. In other words, $N^{2-Q(\bbG)}$ is the fundamental solution of the sublaplacian. 

When $\bbG = (\bbR^m, +)$ is euclidean with $m \geq 3$, the Kor\'anyi-Folland gauge is the Carnot-Carath\'eodory distance (which is the euclidean norm) up to constants, while on a $H$-type group, which is a particular type of stratified Lie groups generalising the Heisenberg group, the Kor\'anyi-Folland gauge admits an explicit formula and is also called the Kaplan norm. Although they are comparable (meaning they are topologically equivalent), they satisfy different functional inequalities. Indeed, gaussian-like measures defined in terms of the Carnot-Carath\'eodory distance satisfy a logarithmic Sobolev inequality for $p \geq 2$ and a Poincar\'e inequality for $p \geq 1$ by  \cite[Corollaries~3.1~and~4.1]{HZ09} while those defined in terms of the Kor\'anyi-Folland gauge never satisfy a logarithmic Sobolev inequality for any $p \geq 1$ and a Poincar\'e inequality for $p \geq 2$ by \cite[Theorem~6.3]{HZ09} and \cite[Theorem~4.19]{inglis2012spectral} respectively. For more details on stratified Lie groups and homogeneous norms, we refer the reader to \cite[Chapters~1~and~5]{BLU07}.

Let $L = \Delta_\bbG - \nabla_\bbG U \cdot \nabla_\bbG$ for $U \in W^{2, 1}_\loc(\bbG)$ satisfying $Z = \int e^{-U}d\xi < \infty$ with $d\xi$ the Lebesgue measure on $\bbG$. It is well known this implies $L$ is an essentially selfadjoint operator in $L^2(\mu)$ and its extension generates a symmetric Markov semigroup $(P_t)_{t \geq 0}$ in $L^2(\mu)$. It has Dirichlet form $\cE(f, f) = \int \abs{\nabla_\bbG f}^2d\mu$ whose domain is $\cD(\cE) = W^{1, 2}(\mu)$ with $d\mu = \frac{1}{Z}e^{-U}d\xi$, so that the super-Poincar\'e inequality \eqref{spi0} we wish to show is 
\begin{equation}\label{spi1} 
\int f^2d\mu \leq \epsilon\int\abs{\nabla_\bbG f}^2d\mu + \beta_2(\epsilon)\left(\int \abs{f}d\mu\right)^2, \quad \epsilon > \epsilon_0 \geq 0,
\end{equation} 
while the $U$-bounds \eqref{ubound0} we shall use are
\begin{equation}\label{ubound1}
\int \eta f^2d\mu \leq A\int \abs{\nabla_\bbG f}^2d\mu + B\int f^2d\mu, \quad A, B > 0.
\end{equation}

One important point to make is the connection between diffusion and Schr\"odinger operators. For $L = L_0 - \Gamma_0(U, -)$ where $L_0$ is a diffusion operator (meaning a second order differential operator without potential) symmetric in $L^2(e^{-U}\mu_0)$ and $\Gamma_0(f) = \frac{1}{2}(L_0(f^2) - 2fL_0f)$ is its carr\'e du champ, it is known \cite[\S1.15.7]{BGL14} 
\[ \tilde{L} = e^{-U/2}Le^{U/2} = L_0 - V = L_0 - \left(-\frac{L_0U}{2} + \frac{\Gamma_0(U)}{4}\right), \]
i.e. $L$ and $\tilde{L}$ are conjugate to one another. Moreover, $L$ is symmetric in $L^2(e^{-U}\mu_0)$ and multiplication by $e^{U/2}$ is an isometry between $L^2(e^{-U}\mu_0)$ into $L^2(\mu_0)$ so that the operators $L$ and $\tilde{L}$ are unitarily equivalent and share the same spectral content. In particular, $L$ has discrete spectrum if and only if $\tilde{L}$ has discrete spectrum. Moreover, from the viewpoint of Schr\"odinger operators and in light of \cite[Remark~2.3]{RZ21}, this correspondence reveals that a $U$-bound in $L^2$ is essentially a quadratic form lower bound $-\tilde{L} = -L_0 + V \geq V$ viewed from the perspective of diffusion operators, i.e. if $g = e^{U/2}f$ and $V = -\frac{L_0U}{2} + \frac{\Gamma_0(U)}{4}$ then
\[ (-Lg, g)_{L^2(\mu)} = (-e^{-U/2}Le^{U/2}f, f)_{L^2(dx)} = (-\tilde{L}f, f)_{L^2(dx)}\geq (Vf, f)_{L^2(dx)} = (Vg, g)_{L^2(\mu)}. \]

We are motivated by the results of \cite[Theorem~4.19]{inglis2012spectral} who extended the proof of the Poincar\'e inequality beyond the scope of \cite[Theorem~3.1]{HZ09} to allow for degenerate $U$-bounds $\eta$ on $H$-type groups, meaning those $\eta$ not diverging to infinity (and instead vanishing in some directions). Were $\eta$ nondegenerate, for instance when $U = d^p$ with $d$ the Carnot-Carath\'eodory distance on $\bbG$, there is a simple adaptation of the proof of \cite[Theorem~3.1]{HZ09} which recovers the super-Poincar\'e inequality essentially for free. Nonetheless, as explained in the following chapter, it turns out we actually know a super-Poincar\'e inequality exists for some probabilities (including those studied by \cite{inglis2012spectral}) admitting degenerate $U$-bounds, thanks to the equivalence between the super-Poincar\'e inequality and discreteness of spectrum together with a proof of the latter by \cite[Theorem~4.2]{BC17} affirmatively answering a conjecture of \cite{inglis2012spectral} in the setting of the more general M\'etivier groups. Indeed, it was shown measures of the form $d\mu = \frac{1}{Z}e^{-N^p}d\xi$ can be associated with a Schr\"odinger operator whose potential satisfies a discreteness of spectrum criterion first due to \cite[Theorem~3]{Sim08} and generalised later by \cite[Proposition~4.6]{BC22}. We are thus motivated by \cite{BC17} to complete the picture from the other side by providing a (quantitative) proof from the probabilistic viewpoint (via super-Poincar\'e inequalities), and this is the content of Theorem \ref{thm1}. In doing so, we connect the spectral theory of such operators with degenerated quadratic form lower bounds (or potentials from the Schr\"odinger perspective) with Hardy-type inequalities. In the sequel, all integration happens over $\bbG$ unless otherwise stated.

\begin{definition}
    A stratified Lie group $\bbG$ is said to be a M\'etivier group when the Lie algebra $(\mf{g}, [\cdot, \cdot])$ of left-invariant vector fields with center $\mf{z}$, is equipped with an inner product $\inner{\cdot, \cdot}$ such that $[\mf{z}^\perp, \mf{z}^\perp] = \mf{z}$ and for each $Z \in \mf{z}$ the map $J_Z: \mf{z}^\perp \rightarrow \mf{z}$ defined by 
    \[ \inner{J_Z(X), Y} = \inner{Z, [X, Y]}, \quad Y \in \mf{z}^\perp \] 
    is nondegenerate for $Z \neq 0$. Moreover, identifying $\bbG$ with $\bbR^{2n} \times \bbR^m$ via the exponential map for some $n, m \in \bbZ_{\geq 1}$, we may also identify the maps $\{J_Z \mid Z \in \mf{z}\}$ with a family $\{J_t \mid t \in \bbR^m\}$ of $2n$-dimensional skew-symmetric matrices inducing the group law
    \[ (x, z) \cdot (x', z') = \left(x + x', z + z' + \frac{1}{2}\sum_{k=1}^m (J_{e_k}x, x')e_k\right) \]
    for each $(x, z), (x, z') \in \bbR^{2n} \times \bbR^m$ and with $\{e_1, \cdots, e_m\}$ the standard euclidean basis of $\bbR^m$. If in addition $J_Z$ is orthogonal, that is an isometry, then $\bbG$ is said to be a $H$-type group. With this identification, the first strata $\mf{g}_0$ generating the Lie algebra $\mf{g}$ corresponds to $\bbR^{2n}$ and admits a canonical basis $\nabla_\bbG \vcentcolon= (X_1, \cdots, X_{2n})$ comprising the subgradient. We write $\abs{x}$ and $\abs{z}$ for the euclidean norm of $x \in \bbR^{2n}$ and $z \in \bbR^m$ respectively. 
\end{definition}

\begin{definition}
    Let $\bbG \cong \bbR^{2n} \times \bbR^m \ni (x, z)$ be a M\'etivier group. The Kaplan norm $N$ on $\bbG$ is the homogeneous norm 
    \[ N(x, z) = (\abs{x}^4 + 16\abs{z}^2)^{1/4}. \]
    If $\bbG$ is a $H$-type group in the sense of \cite[\S18]{BLU07}, then the Kaplan norm coincides with the Kor\'anyi-Folland gauge due to \cite[Theorem~2]{kaplan1980fundamental}. Note also $\abs{x} \leq N$. 
\end{definition}

\begin{theorem}\label{thm1}
    Let $\bbG$ be a M\'etivier group and consider the probability measure $d\mu = Z^{-1}e^{-N^p}d\xi$ for $p > 2$ with normalisation constant $Z = \int e^{-N^p}d\xi$, Kaplan norm $N$, and Lebesgue measure $d\xi$ on $\bbG$. Then \eqref{spi1} holds with
    \[ \beta_2(\epsilon) \sim \exp(C\epsilon^{-p/(p-2)}), \quad \epsilon_0 = 0 \text{ and some } C > 0.\]
\end{theorem}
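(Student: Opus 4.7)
The plan is to combine a $U$-bound with a truncation argument at scale $R$, then optimise $R$ in terms of $\epsilon$. First I would derive the naive $U$-bound associated to $U = N^p$: using the square-expansion method (applied to $\int |\nabla_\bbG(e^{U/2} g)|^2 d\mu$ with $g = e^{-U/2}f$) together with the identity $|\nabla_\bbG N|^2 = |x|^2/N^2$ that holds on M\'etivier groups and the scaling behaviour of $\Delta_\bbG N^p$, one expects an inequality
\[ \int |x|^2 N^{2p-4} f^2 d\mu \leq A \int |\nabla_\bbG f|^2 d\mu + B \int f^2 d\mu. \]
The weight $|x|^2 N^{2p-4}$ is degenerate in the sense of \cite{inglis2012spectral}: it diverges in the horizontal direction but vanishes along the centre $\{x=0\}$, and hence cannot by itself dominate $\int f^2 d\mu$ outside a compact set.

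The key technical ingredient is a Hardy-type inequality compensating for this degeneracy. The goal is a second $U$-bound of schematic form
\[ \int \frac{N^{p-2}}{1 + (|x|/N)^\alpha} f^2 d\mu \leq A' \cE(f,f) + B' \int f^2 d\mu \]
for an appropriate $\alpha > 0$, which controls $f$ precisely in the region where the naive weight fails. Taking the maximum of the two weights then yields an improved $U$-bound $\int \tilde\eta f^2 d\mu \leq A'' \cE(f,f) + B'' \int f^2 d\mu$ with $\tilde\eta \gtrsim N^{p-2}$ on $\{N \geq 1\}$. I expect this to be the principal obstacle, as the Hardy inequality must exploit the M\'etivier commutator condition $[\mf{z}^\perp, \mf{z}^\perp] = \mf{z}$ to convert horizontal derivatives into control along the central direction where the naive bound degenerates; the announcement of a novel Hardy inequality in the abstract suggests this is indeed the step the author singles out as the main contribution.

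Given a nondegenerate $U$-bound, the remainder is a reasonably standard truncation. Let $\chi_R$ be a smooth cutoff equal to $1$ on $\{N \leq R\}$, supported in $\{N \leq 2R\}$, with $|\nabla_\bbG \chi_R| \lesssim R^{-1}$. On the exterior, $\tilde\eta \gtrsim R^{p-2}$ yields
\[ \int (1 - \chi_R^2) f^2 d\mu \leq C R^{-(p-2)} \Bigl( \cE(f,f) + \int f^2 d\mu \Bigr), \]
while on the interior $\{N \leq 2R\}$ the density satisfies $e^{-N^p} \geq e^{-CR^p}$, so a local Poincar\'e--Sobolev inequality on balls in $\bbG$ gives a local super-Poincar\'e inequality with interior growth function $\sim e^{CR^p}$. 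Combining the two and absorbing the cutoff commutator $|\nabla_\bbG \chi_R|^2 \lesssim R^{-2}$ into the exterior term yields
\[ \int f^2 d\mu \leq \epsilon(R)\, \cE(f,f) + e^{CR^p} \Bigl( \int |f| d\mu \Bigr)^2 \]
with $\epsilon(R) \asymp R^{-(p-2)}$. Solving $\epsilon = R^{-(p-2)}$ gives $R \sim \epsilon^{-1/(p-2)}$ and hence $\beta_2(\epsilon) \sim \exp(C \epsilon^{-p/(p-2)})$ with $\epsilon_0 = 0$, matching the stated rate. The exponent $p/(p-2)$ reflects the balance between the $R^p$ growth of the log-density bound on balls and the $R^{p-2}$ gain coming from the nondegenerate weight $\tilde\eta$.
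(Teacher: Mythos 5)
Your outer architecture coincides with the paper's: the naive $U$-bound $\int \abs{x}^2N^{2(p-2)}f^2d\mu \lesssim \cE(f,f)+\int f^2d\mu$ (the paper's Lemma \ref{lem:inglisubound}), a ball/complement decomposition at scale $R$, the local super-Poincar\'e inequality from the Sobolev inequality on $\bbG$ with the density bounded below by $e^{-R^p}$ on the ball, and the optimisation $R\sim\epsilon^{-1/(p-2)}$ giving $\beta_2(\epsilon)\sim\exp(C\epsilon^{-p/(p-2)})$. That part is fine. The gap is that the one step you yourself flag as the principal obstacle --- the Hardy-type inequality compensating the degeneracy along $\{x=0\}$ --- is left as a black box, and the mechanism you guess for it is not the right one. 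Your proposed ``second $U$-bound'' with weight $N^{p-2}/(1+(\abs{x}/N)^\alpha)$ is, since $\abs{x}\le N$, already comparable to $N^{p-2}$ everywhere, so it is not an auxiliary bound on the degenerate region but the full nondegenerate $U$-bound you are trying to reach; asserting it is circular. Moreover, nothing in the proof exploits the commutator condition $[\mf{z}^\perp,\mf{z}^\perp]=\mf{z}$ to ``convert horizontal derivatives into control along the central direction'': the M\'etivier structure enters only through the pointwise identities $\abs{\nabla_\bbG N}=\abs{x}/N$ and $\nabla_\bbG N\cdot\nabla_\bbG\abs{x}\simeq\abs{x}^3/N^3$.

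What the paper actually proves is the weighted Hardy inequality
\[ \int \frac{f^2}{\abs{x}^2}\,d\mu \;\lesssim\; \int\abs{\nabla_\bbG f}^2d\mu + \int \abs{x}^6N^{2(p-4)}f^2d\mu \;\lesssim\; \cE(f,f)+\int f^2d\mu, \]
obtained by integration by parts against $h=\nabla_\bbG\log\abs{x}$ (valid for horizontal dimension $2n>2$), the point being that the error term produced by the measure, $\abs{\nabla_\bbG N^p\cdot\nabla_\bbG\log\abs{x}}^2\abs{x}^2 \simeq \abs{x}^6N^{2(p-4)}$, is dominated by the naive weight $\abs{x}^2N^{2(p-2)}$ and hence absorbed by the $U$-bound you already have. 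The exterior estimate then comes from Cauchy--Schwarz between the singular weight and the degenerate weight: on $B_R^c$,
\[ \int_{B_R^c} f^2d\mu \le \Bigl(\int \frac{f^2}{\abs{x}^2}d\mu\Bigr)^{1/2}\Bigl(\frac{1}{R^{2(p-2)}}\int\abs{x}^2N^{2(p-2)}f^2d\mu\Bigr)^{1/2} \lesssim \frac{1}{R^{p-2}}\Bigl(\cE(f,f)+\int f^2d\mu\Bigr), \]
i.e.\ the geometric mean of $\abs{x}^{-2}$ and $\abs{x}^2N^{2(p-2)}$ is exactly the nondegenerate weight $N^{p-2}$ you wanted. (The paper also gives a dimension-free variant, replacing the singular Hardy weight by the linearised inequality $\int f^2d\mu\lesssim\epsilon\int\abs{\nabla_\bbG f}^2d\mu+\epsilon^{-1}\int\abs{x}^2f^2d\mu$, proved by integrating by parts against $h=x$; without this your argument silently assumes $2n>2$.) So your proposal would be completed, not by a new commutator-based estimate, but by supplying this Hardy inequality and the H\"older (or linearised) combination; as written, the central lemma is missing.
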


In fact, we will be able to prove for $q \in (1, 2)$ the variant 
\begin{equation}\label{spi2}
    \int \abs{f}^qd\mu \leq \epsilon \int \abs{\nabla_\bbG f}^qd\mu + \beta_q(\epsilon)\left(\int \abs{f}^{q/2}d\mu\right)^2, \quad \epsilon > \epsilon_0 \geq 0
\end{equation}
of the super-Poincar\'e inequality, called the $q$-super-Poincar\'e inequality. It is known, thanks to \cite[Lemma~4.10]{inglis2012spectral}, that this $q$-variant is stronger than \eqref{spi1}. Note there are also similar $q$-variants for the Poincar\'e and logarithmic Sobolev inequalities which were again studied using $U$-bounds in \cite{HZ09} and shown to improve (and imply stronger properties for $\mu$) the standard Poincar\'e and logarithmic Sobolev inequalities, see \cite[\S2]{bobkov2005entropy}.

\begin{theorem}\label{thm2}
    Let $\bbG$ be a M\'etivier group and consider the probability measure $d\mu = Z^{-1}e^{-N^p}d\xi$ for $p > 2$ with normalisation constant $Z = \int e^{-N^p}d\xi$, Kaplan norm $N$, and Lebesgue measure $d\xi$ on $\bbG$. Then \eqref{spi2} holds with $q$ H\"older conjugate to $p$ and
    \[ \beta_q(\epsilon) \sim \exp(C\epsilon^{-2(p-1)/(p-2)}), \quad \epsilon_0 = 0 \text{ and some } C > 0.\]
    Moreover, the exponent $2(p-1)/(p-2)$ is optimal in the sense it implies the exponent $p/(p-2)$ in the statement of Theorem \ref{thm1} up to constants.
\end{theorem}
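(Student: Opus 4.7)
The plan is to follow the $U$-bound paradigm of \cite{HZ09}: derive a non-degenerate $L^q$ $U$-bound for $d\mu$, then combine it with a local $q$-super-Poincar\'e inequality on Kaplan balls via a standard cutoff. The new technical input, announced in the abstract, is a Hardy-type inequality on M\'etivier groups which upgrades the naturally degenerate $U$-bound arising from the Kaplan norm to a coercive, non-degenerate one.

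Concretely, a Leibniz / integration-by-parts calculation with the potential $U = N^p$, in the spirit of the $L^q$ $U$-bounds of \cite[\S3]{HZ09}, produces an initial bound whose weight contains $|\nabla_\bbG U|^q$, and hence a factor $|\nabla_\bbG N|^q$ that vanishes along the center $\{x = 0\}$ of $\bbG$. The key step is then to prove a Hardy-type inequality that trades the vanishing factor $|\nabla_\bbG N|^q$ for a non-vanishing function of $N$, schematically of the form
\[ \int N^\alpha |f|^q d\mu \leq C_1 \int |\nabla_\bbG U|^q |f|^q d\mu + C_2 \int |\nabla_\bbG f|^q d\mu + C_3 \int |f|^q d\mu \]
for a suitable exponent $\alpha > 0$. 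Combining this with the initial $U$-bound yields a non-degenerate one of the form $\int N^\alpha |f|^q d\mu \leq A \int |\nabla_\bbG f|^q d\mu + B \int |f|^q d\mu$. Establishing such a Hardy inequality with a sharp enough $\alpha$ is the main technical obstacle and, as indicated in the abstract, the principal new ingredient of the paper.

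With a non-degenerate $L^q$ $U$-bound in hand, the remainder of the proof is a standard cutoff: for $K > 0$, split $\int |f|^q d\mu = \int_{\{N^\alpha > K\}} |f|^q d\mu + \int_{\{N^\alpha \leq K\}} |f|^q d\mu$. The first term is controlled by $K^{-1}$ times the $U$-bound. The second is supported in a Kaplan ball $B_R$ with $R \sim K^{1/\alpha}$, on which $d\mu / d\xi$ lies between $e^{-R^p}$ and a constant, so that a classical Sobolev-Nash embedding on $(\bbG, d_\bbG)$ supplies a local $q$-super-Poincar\'e inequality with growth function comparable to $\epsilon^{-\delta} \exp(c R^p)$. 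Choosing $K$ large enough to absorb the $B \int |f|^q d\mu$ term and balancing $\epsilon \sim 1/K$ yields $\beta_q(\epsilon) \sim \exp(c \epsilon^{-p/\alpha})$, and tuning $\alpha$ in the Hardy step to match $p/\alpha = 2(p-1)/(p-2)$ gives the claim. The final assertion---that this exponent implies the exponent $p/(p-2)$ of Theorem~\ref{thm1}---is the $q$-to-$2$ self-improvement recorded in \cite[Lemma~4.10]{inglis2012spectral}, applied to $g^{2/q}$ in place of $f$.
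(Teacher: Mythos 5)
Your overall architecture --- a local $q$-super-Poincar\'e inequality on a Kaplan ball, a coercive bound off the ball, glued by a cutoff with $R\sim\epsilon^{-1/\alpha}$ --- is the right one and matches the paper's. But the proposal has a genuine gap at precisely the point it flags as the main obstacle: the Hardy-type inequality $\int N^\alpha\abs{f}^q d\mu\lesssim \int\abs{\nabla_\bbG U}^q\abs{f}^q d\mu+\int\abs{\nabla_\bbG f}^q d\mu+\int\abs{f}^q d\mu$ is asserted for ``a suitable $\alpha$,'' and $\alpha$ is then reverse-engineered from the desired answer via $p/\alpha=2(p-1)/(p-2)$. Since the entire content of the theorem is which $\alpha$ is achievable and by what mechanism, deferring this is not a proof. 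Concretely one needs $\alpha=(p-q)/2=p(p-2)/(2(p-1))$, and the mechanism is not a weight in $N$ at all but the singular Hardy inequality $\int\abs{x}^{-q}\abs{f}^q d\mu\lesssim\int\abs{\nabla_\bbG f}^q d\mu+\int\abs{f}^q d\mu$, obtained from the integration-by-parts identity \eqref{lqhardygeneral} with $V=\abs{x}^{2-q}$ (valid since the horizontal dimension is $2n\ge 2>q$). Granting that, your non-degenerate $U$-bound does follow: since $\abs{\nabla_\bbG U}^q\simeq\abs{x}^qN^{p-q}$ is exactly the weight in the improved $U$-bound \eqref{eq:inglisuboundimproved}, Cauchy--Schwarz gives
\[ \int N^{(p-q)/2}\abs{f}^q d\mu\le\left(\int\frac{\abs{f}^q}{\abs{x}^q}d\mu\right)^{1/2}\left(\int\abs{x}^qN^{p-q}\abs{f}^q d\mu\right)^{1/2}\lesssim\int\abs{\nabla_\bbG f}^q d\mu+\int\abs{f}^q d\mu, \]
after which your cutoff argument produces the stated exponent. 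But neither the singular Hardy inequality nor the improved $U$-bound is identified or established in the proposal; note in particular that the $L^q$ $U$-bound already in the literature (\cite{inglis2012spectral}, with weight $\abs{x}^2N^{p-2}$) is strictly weaker and would yield a suboptimal exponent, so the strengthening is not optional.

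Two smaller points. First, the paper never actually forms the intermediate non-degenerate $U$-bound: it keeps $\eta=\abs{x}^qN^{p-q}$ degenerate and applies H\"older directly on $B_R^c$, pairing $\int_{B_R^c}\abs{x}^{-q}\abs{f}^q d\mu$ against $R^{-(p-q)}\int\eta\abs{f}^q d\mu$; it also gives a second, ``linearised'' proof replacing H\"older by the scalar inequality $1\lesssim\epsilon^{2-q}\abs{x}^{q-2}+\epsilon^{-q}\abs{x}^q$ together with a Hardy inequality for the milder weight $\abs{x}^{q-2}$, which removes any dimensional restriction; your packaging is equivalent to the first route once the two missing lemmas are supplied. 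Second, your final step (the $q\to 2$ transfer via $f=g^{2/q}$ and Young's inequality, multiplying the exponent by $q/2$ to recover $p/(p-2)$) is correct and is how the paper justifies the optimality clause.
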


Before we proceed with the proof of these theorems, we note an important intermediate result, namely that the Sobolev inequality on a stratified Lie group (see, for instance, \cite[\S4.7]{varopoulos1993analysis}) together with a linearised interpolation inequality implies the following $q$-super-Poincar\'e inequality for Lebesgue measure.

\begin{lemma}\label{lem:qspilocal}
Let $\bbG$ be a stratified Lie group of homogeneous dimension $Q(\bbG)$ with Lebesgue measure $d\xi$. For $q \in (1, 2]$, it holds
    \begin{equation}\label{eq:qspilocal}
        \int \abs{f}^qd\xi \leq \delta \int \abs{\nabla_\bbG f}^qd\xi + \tilde{\beta}_q(\delta)\left(\int \abs{f}^{q/2}d\xi\right)^2, \quad \tilde{\beta}_q(\delta) \sim 1 + \delta^{-Q(\bbG)/q}. 
    \end{equation}
\end{lemma}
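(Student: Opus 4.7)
The plan is to combine the subelliptic Sobolev inequality on $\bbG$ with a H\"older interpolation, and then linearise the resulting multiplicative inequality via Young's inequality with a free parameter.

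First, I would invoke the subelliptic Sobolev embedding on a stratified Lie group of homogeneous dimension $Q = Q(\bbG)$: for $1 \leq q < Q$,
\[ \norm{f}_{L^{q^*}} \leq C_S\, \norm{\nabla_\bbG f}_{L^q}, \qquad q^* := qQ/(Q-q), \]
as found for instance in \cite{varopoulos1993analysis}. Since $q/2 < q < q^*$, I would then interpolate in order to bound $\norm{f}_{L^q}$: solving $1/q = \theta/(q/2) + (1-\theta)/q^*$ yields $\theta = q/(Q+q)$, and raising to the $q$th power gives
\[ \norm{f}_{L^q}^q \leq C_S^{q(1-\theta)}\, \norm{f}_{L^{q/2}}^{q\theta}\, \norm{\nabla_\bbG f}_{L^q}^{q(1-\theta)}. \]

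Second, I would apply the weighted Young inequality $AB \leq \lambda^s A^s/s + \lambda^{-s'}B^{s'}/s'$ to the product on the right, with conjugate exponents $s = 1/(1-\theta) = (Q+q)/Q$ and $s' = 1/\theta = (Q+q)/q$. These are precisely the exponents for which $A^s$ becomes $\norm{\nabla_\bbG f}_{L^q}^q$ and $B^{s'}$ becomes a constant multiple of $\norm{f}_{L^{q/2}}^q = \left(\int \abs{f}^{q/2}\,d\xi\right)^2$. The key numerical fact is that $s'/s = (1-\theta)/\theta = Q/q$, which is exactly the exponent that must appear in $\tilde\beta_q$. Reparameterising $\delta = \lambda^s/s$ then turns the free parameter into the statement's $\delta$ and produces a coefficient of order $\delta^{-Q/q}$ in front of $\norm{f}_{L^{q/2}}^q$. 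The ``$1+$'' in $\tilde\beta_q(\delta)\sim 1+\delta^{-Q/q}$ is merely a convenient uniform upper bound, since $\delta^{-Q/q} \leq 1+\delta^{-Q/q}$ and the inequality is anyway slack for large $\delta$ due to the gradient term.

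The argument is essentially a bookkeeping exercise, so I do not foresee a real obstacle; the only step requiring care is the matching of exponents so that Young's inequality produces the precise power $-Q/q$. One technical point is the condition $q < Q$ needed to apply the Sobolev inequality, which is harmless for the M\'etivier groups that will eventually carry the measure $d\mu$, since these satisfy $Q \geq 4$ while $q \in (1,2]$. Standard density then extends the inequality from $\CIc(\bbG)$ to the natural Sobolev domain.
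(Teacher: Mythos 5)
Your proposal is correct and follows exactly the route the paper indicates (the paper only sketches it as ``the Sobolev inequality together with a linearised interpolation inequality''): subelliptic Sobolev embedding, H\"older interpolation of $L^q$ between $L^{q/2}$ and $L^{q^*}$ with $\theta = q/(Q+q)$, and weighted Young's inequality producing the exponent $s'/s = Q/q$. The exponent bookkeeping checks out, and your remark that $q < Q(\bbG)$ is automatic in the intended applications is an appropriate caveat.
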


\section{Proof of main results}

\subsection{The nondegenerate case}\label{S2.1}
The proof in the degenerate setting will differ from the methods of \cite{inglis2012spectral} in the sense the latter decomposed the space into three regions. Our proof however will follow the standard decomposition of the space into a ball and its complement. Before we begin let us note that this type of decomposition has been known since at least \cite{Wan00, Wan02, cattiaux2009lyapunov, HZ09}. The methods in these works are in spirit the same but \cite{cattiaux2009lyapunov} uses Lyapunov functions (see also \cite[\S4.6.1]{BGL14}) to control the integral taken over the complement of the ball while the others use quadratic form lower bounds. For completeness, let us consider first the case of the $2$-super-Poincar\'e inequality with the Carnot-Carath\'eodory distance on the Heisenberg group. The argument begins with the following $U$-bound which is the content of \cite[Theorem~2.3]{HZ09}. In the sequel, we write $A \lesssim B$ whenever $A \leq CB$ for some constant $C > 0$ and $f$ always belongs to $\cD(\cE)$.   
\begin{lemma}\label{lem:hzubound}
    Let $\bbG$ be the $1$-Heisenberg group $\bbH^1 \cong \bbR^3$ and consider the probability measure $d\mu = Z^{-1}e^{-d^p}d\xi$ for $p > 1$ with normalisation constant $Z = \int e^{-d^p}d\xi$, Carnot-Carathe\'odory distance $d$, and Lebesgue measure on $d\xi$ on $\bbG$. Then
    \begin{equation}\label{eq:hzubound}
        \int d^{2(p-1)}f^2 d\mu \lesssim \int \abs{\nabla_\bbG f}^2d\mu + \int f^2d\mu.
    \end{equation}
\end{lemma}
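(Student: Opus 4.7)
My plan is to follow the integration-by-parts template of \cite{HZ09}, which requires only the eikonal identity, Leibniz rule, and Young's inequality. Set $U = d^p$ so that $d\mu = Z^{-1}e^{-U}d\xi$, and recall that on $\bbH^1$ the Carnot--Carath\'eodory distance is $1$-Lipschitz and satisfies the eikonal identity $|\nabla_\bbG d|^2 = 1$ almost everywhere. Consequently $|\nabla_\bbG U|^2 = p^2 d^{2(p-1)}$ a.e., so up to the constant $p^2$ the left-hand side of \eqref{eq:hzubound} equals $Z^{-1}\int f^2 |\nabla_\bbG U|^2 e^{-U} d\xi$, and the task reduces to bounding this quantity by the two terms appearing on the right-hand side of \eqref{eq:hzubound}.

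Next, I would rewrite $|\nabla_\bbG U|^2 e^{-U} = -\nabla_\bbG U \cdot \nabla_\bbG e^{-U}$ and integrate by parts against $f^2$ to obtain
\[ \int f^2 |\nabla_\bbG U|^2 e^{-U} d\xi = \int \bigl(2f \nabla_\bbG f \cdot \nabla_\bbG U + f^2 \Delta_\bbG U\bigr) e^{-U} d\xi. \]
The cross term is treated by Young's inequality $|2f\nabla_\bbG f\cdot \nabla_\bbG U| \leq \delta f^2 |\nabla_\bbG U|^2 + \delta^{-1}|\nabla_\bbG f|^2$, with $\delta < 1$: the first summand gets absorbed into the left-hand side and the second produces the Dirichlet energy on the right-hand side of \eqref{eq:hzubound}.

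What remains is to estimate $\int f^2 \Delta_\bbG U \cdot e^{-U} d\xi$ by a small multiple of the left-hand side plus $\int f^2 d\mu$. A direct computation using the eikonal identity yields $\Delta_\bbG U = p(p-1) d^{p-2} + p d^{p-1}\Delta_\bbG d$. For $p > 1$ the pure power piece is handled by Young's inequality for powers of the form $d^{p-2} \lesssim \delta' d^{2(p-1)} + C_{\delta'}$, valid on $\{d \geq 1\}$, together with a routine truncation argument on $\{d \leq 1\}$ where $d^{p-2}$ is either bounded (if $p \geq 2$) or controlled via the sub-Riemannian Hardy inequality on $\bbH^1$ (if $1 < p < 2$).

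The main obstacle, and the substantive input, is the mixed term $d^{p-1}\Delta_\bbG d$. For this I would invoke the sub-Riemannian Laplacian comparison estimate on $\bbH^1$, which asserts $\Delta_\bbG d \lesssim d^{-1}$ where $d$ is smooth (and distributionally elsewhere) by analogy with the Euclidean identity $\Delta |x| = (n-1)/|x|$; on $\bbH^1$ this can be verified either from the explicit formula for the distance or from a subelliptic maximum principle. Combined with the previous paragraph this gives $d^{p-1}|\Delta_\bbG d| \lesssim d^{p-2}$, already dealt with above. Absorbing all $d^{2(p-1)}$ contributions into the left-hand side then yields \eqref{eq:hzubound} with explicit constants depending only on $p$.
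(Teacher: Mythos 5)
The paper offers no proof of this lemma at all: it is imported verbatim as \cite[Theorem~2.3]{HZ09}, so the only thing to compare your argument against is that cited source. Your reconstruction does follow the same strategy as \cite{HZ09} — integrate $\abs{\nabla_\bbG U}^2 e^{-U} = -\nabla_\bbG U\cdot\nabla_\bbG e^{-U}$ by parts against $f^2$, use the eikonal identity $\abs{\nabla_\bbG d}=1$ of Monti, and absorb the cross term by Young's inequality — and that architecture is sound.

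There is, however, a genuine gap at exactly the step you call ``the substantive input'', namely $\Delta_\bbG d\lesssim d^{-1}$. This cannot be justified ``by analogy with the Euclidean identity $\Delta\abs{x}=(n-1)/\abs{x}$'', because the analogy fails in the relevant respect: on $\bbH^1$ the Carnot--Carath\'eodory distance is \emph{not} smooth away from the origin. It fails to be differentiable on the whole centre $\{x=0\}$, which is the cut locus of the identity, and near the centre one has $d(x,z)=c\sqrt{\abs{z}}-\abs{x}+o(\abs{x})$, so that $\Delta_\bbG d\sim -\abs{x}^{-1}\to-\infty$ there rather than behaving like $(Q-1)/d$. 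The bound $\Delta_\bbG d\le K/d$ is true precisely because this blow-up, and the distributional contribution carried by the centre, both have the \emph{favourable} (negative) sign; checking this requires the implicit Beals--Gaveau--Greiner/Monti description of $d$ and is the real technical content of \cite[Theorem~2.3]{HZ09}. There is also no off-the-shelf subelliptic maximum principle or Laplacian comparison theorem on $\bbH^1$ that delivers the estimate, so as written your proof asserts, rather than proves, its crucial ingredient. The remaining loose end you flag — the term $p(p-1)d^{p-2}$ near the origin when $1<p<2$ — is comparatively minor and can indeed be handled by a Hardy inequality for the Carnot--Carath\'eodory distance or by a cutoff, as you indicate.
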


Without loss of generality assume $f \geq 0$ and let $B_R = \{x \in \bbG \mid d(x) \leq R\}$ be the $d$-ball centred at the origin (which is also the identity element in any stratified Lie group), write $B_R^c$ for its complement, and also write for simplicity $U = d^p$, so that $d\mu = Z^{-1}e^{-U}d\xi$. Then by the super-Poincar\'e inequality for Lebesgue measure \eqref{eq:qspilocal} we can estimate the left hand side of \eqref{spi1} over $B_R$ by 
\begin{align*}
    \int_{B_R} f^2d\mu &\lesssim \int_{B_R} (fe^{-U/2})^2d\xi \vphantom{\tilde{\beta}_2(\delta) \left(\int_{B_R} fe^{-U/2}d\xi\right)^2} \\
    &\lesssim \delta \int_{B_R} \abs{\nabla_\bbG fe^{-U/2}}^2d\xi + \tilde{\beta}_2(\delta) \left(\int_{B_R} fe^{-U/2}d\xi\right)^2 \\
    &\lesssim \delta \int_{B_R} \abs{\nabla_\bbG f}^2d\mu + \delta \int_{B_R} \abs{\nabla_\bbG U}^2f^2d\mu + \tilde{\beta}_2(\delta) \left(\int_{B_R} fe^{-U/2}d\xi\right)^2
\end{align*}
where $\tilde{\beta}_2(\delta) \lesssim 1+ \delta^{-Q(\bbH^1)/2}$ and $Q(\bbG) = Q(\bbH^1) = 4$. Since $B_R$ is compact one expects to be able to remedy the defective $L^1$-term (with respect to the incorrect measure $e^{-U/2}d\xi$) and also control $\abs{\nabla_\bbG U}^2$ by continuity. Indeed, $\abs{\nabla_\bbG U}^2 \lesssim \abs{\nabla_\bbG d}^2 d^{2(p-1)} \leq R^{2(p-1)}$, since $\abs{\nabla_\bbG d} = 1$ by \cite[Theorem~3.8]{monti2000some}, and $e^{-U/2} = e^{U/2} e^{-U} \leq e^{R^p/2} e^{-U}$. It follows 
\[ \int_{B_R} f^2d\mu \lesssim \delta \int \abs{\nabla_\bbG f}^2d\mu + \delta R^{2(p-1)} \int f^2d\mu + e^{R^p}\tilde{\beta}_2(\delta)\left(\int fd\mu\right)^2 \]
On the other hand, by \eqref{eq:hzubound} we can estimate the left hand side of \eqref{spi1} over $B_R^c$ by
\[ \int_{B_R^c} f^2d\mu \leq \frac{1}{R^{2(p-1)}} \int_{B_R^c} d^{2(p-1)}f^2d\mu \lesssim \frac{1}{R^{2(p-1)}} \left(\int \abs{\nabla_\bbG f}^2d\mu + \int f^2d\mu\right).\]
Let $\gamma = 2(p-1)$. Together this is 
\[ \int f^2d\mu \lesssim \left(\delta + \frac{1}{R^\gamma}\right) \int \abs{\nabla_\bbG f}^2d\mu + e^{R^p}\tilde{\beta}_2(\delta)\left(\int fd\mu\right)^2 + \left(\delta R^\gamma + \frac{1}{R^\gamma}\right)\int f^2d\mu. \]
It suffices to choose $R$ large so that $1/R^\gamma$ is comparable to $\epsilon > 0$, and $\delta$ small, comparable to $\epsilon$, and such that the third addend with coefficient $\cO(\delta R^\gamma)$ can be carried over to the left hand side. Let $R = \epsilon^{-1/\gamma}$ and $\delta = \epsilon/2$ so that $\delta R^\gamma + 1/R^\gamma < 2/3$ for $\epsilon$ sufficiently small and thus
\begin{align*}
    \int f^2d\mu &\lesssim \epsilon \int \abs{\nabla_\bbG f}^2d\mu + e^{R^p}\tilde{\beta}_2(\delta)\left(\int fd\mu\right)^2 \\
    &= \epsilon \int \abs{\nabla_\bbG f}^2d\mu + e^{\epsilon^{-p/\gamma}}(1 + \epsilon^{-Q(\bbG)/2})\left(\int fd\mu\right)^2,
\end{align*}
so that by replacing $\epsilon$ with $\epsilon/K$ for some $K > 0$ large enough, we recover \eqref{spi1} with a growth rate like $\beta_2(\epsilon) \sim \exp(C\epsilon^{-p/\gamma}) = \exp(C\epsilon^{-p/(2(p-1))})$ for some $C > 0$. 

Note this argument is formal, in the sense we have performed this decomposition using characteristic functions $\mathbbm{1}_{B_R}$ and $\mathbbm{1}_{B_R^c}$. However, following the proofs of \cite{Wan00, Wan02, cattiaux2009lyapunov, HZ09}, this argument can be made rigorous by multiplying $f$ by a suitable cutoff (belonging to the domain of $\cE$) such as $\min(1, \max(R - d, 0))$, with the cost being constants. Moreover, it may be somewhat surprising that this elementary approach (of controlling the $L^1$-defect by multiplying and dividing by half the density) is actually sharp in the sense $p/(2(p-1)) \leq 1$ for all $p \geq 2$, implying the expected logarithmic Sobolev inequality for the measure first proven in \cite[Theorem~4.1]{HZ09}, the expected ultracontractivity for $p > 2$ according to \cite[Theorem~5.1]{Wan00}, and the correct concentration of measure property (expressed in terms of exponential integrability) according to \cite[Corollary~6.3]{Wan00}. In addition, it is known by \cite[Theorem~4.10]{inglis2012spectral} for $p \geq 2$ and $q$ H\"older conjugate to $p$ that the stronger $q$-super-Poincar\'e inequality holds and has a growth rate $\beta_q$ which implies this one up to constants. 

\begin{remark}
    This argument also works on general stratified Lie groups; the main ingredient is the fact the Carnot-Carath\'eodory distance satisfies the eikonal equation $\abs{\nabla_\bbG d} = 1$ by \cite[Theorem~3.1]{monti2001surface}.
\end{remark}

\subsection{M\'etivier groups: the $L^2$-setting}

As mentioned in the introduction, the main difference in the degenerate setting is that the following $U$-bound $\eta$ vanishes in certain directions and therefore need not be bounded below on $B_R^c$.

\begin{lemma}\label{lem:inglisubound}
    Let $\bbG \cong \bbR^{2n} \times \bbR^m \ni (x, z)$ be a M\'etivier group and consider the probability measure $d\mu = Z^{-1}e^{-N^p}d\xi$ for $p > 2$ with normalisation constant $Z = \int e^{-N^p}d\xi$, Kaplan norm $N$, and Lebesgue measure $d\xi$ on $\bbG$. Then
    \begin{equation}\label{eq:inglisubound}
        \int \eta f^2 d\mu \vcentcolon= \int \abs{x}^2N^{2(p-2)}f^2 d\mu \lesssim \int \abs{\nabla_\bbG f}^2 d\mu + \int f^2d\mu.
    \end{equation}
\end{lemma}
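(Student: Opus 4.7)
My strategy is an adaptation of the Hess--Zegarlinski integration by parts previewed in Lemma \ref{lem:hzubound}, with the crucial difference that I will use the test vector field $V = \nabla_\bbG N^p$ so that the structural identity $|\nabla_\bbG N|^2 \asymp |x|^2/N^2$ on M\'etivier groups produces the degenerate weight $|x|^2$ on the left-hand side rather than a nondegenerate one. Applying the $\mu$-integration by parts formula to $V$ (each left-invariant $X_i$ being divergence-free) for test functions in $\CIc(\bbG)$, followed by density, yields
\begin{equation*}
    \int |\nabla_\bbG N^p|^2 f^2 \, d\mu = \int (\Delta_\bbG N^p) f^2 \, d\mu + 2 \int f\, \nabla_\bbG N^p \cdot \nabla_\bbG f \, d\mu.
\end{equation*}

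A direct computation on $\bbG \cong \bbR^{2n} \times \bbR^m$ produces $|\nabla_\bbG N|^2 = (|x|^6 + 16 |J_z x|^2)/N^6$, and because $J_Z$ is nondegenerate one has, by homogeneity and compactness of the unit sphere in $\bbR^m$, the two-sided bound $|J_z x|^2 \asymp |z|^2 |x|^2$. Thus $|\nabla_\bbG N|^2 \asymp |x|^2/N^2$ and $|\nabla_\bbG N^p|^2 = p^2 N^{2(p-1)} |\nabla_\bbG N|^2 \asymp |x|^2 N^{2(p-2)}$, so the LHS above is comparable to the target integral. For the cross term I apply Young's inequality $2ab \leq \epsilon a^2 + \epsilon^{-1} b^2$ with $\epsilon$ sufficiently small to reabsorb a fraction of the target into the LHS, reducing matters to the estimate
\begin{equation*}
    \int |x|^2 N^{2(p-2)} f^2 \, d\mu \lesssim \int (\Delta_\bbG N^p) f^2 \, d\mu + \int |\nabla_\bbG f|^2 \, d\mu.
\end{equation*}

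Next I establish the pointwise bound $|\Delta_\bbG N^p| \lesssim |x|^2 N^{p-4}$. Expanding $\Delta_\bbG N^p = p(p-1) N^{p-2} |\nabla_\bbG N|^2 + p N^{p-1} \Delta_\bbG N$, the first summand is immediately controlled via the identity above. For the second, a direct computation of $\Delta_\bbG N = \sum_i X_i(X_i N)$ using $X_i N = (|x|^2 x_i + 4 (J_z x)_i)/N^3$ produces an expression whose every term carries a factor of $|x|^2/N^3$ modulo M\'etivier constants (using once more the two-sided bound on $|J_z x|^2$ and $\sum_k |J_{e_k} x|^2$), giving $|N^{p-1} \Delta_\bbG N| \lesssim |x|^2 N^{p-4}$.

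Finally I handle $\int |x|^2 N^{p-4} f^2 d\mu$ by splitting along $\bbG = \{N \leq R\} \cup \{N > R\}$ for a single suitably large $R$: on $\{N \leq R\}$ the integrand is $\leq N^{p-2} \leq R^{p-2}$ (using $|x| \leq N$ and $p > 2$), contributing $\lesssim R^{p-2} \int f^2 d\mu$; on $\{N > R\}$ the factorisation $|x|^2 N^{p-4} = N^{-p} \cdot |x|^2 N^{2(p-2)} \leq R^{-p} |x|^2 N^{2(p-2)}$ produces a term reabsorbable into the LHS once $R$ is large. The main obstacle, contrasted with the clean $H$-type case (where $N$ is a multiple of the fundamental solution and the identity $\Delta_\bbG N = (Q-1)|x|^2/N^3$ is at hand), is the bookkeeping around $\Delta_\bbG N$ on a general M\'etivier group where no such miraculous cancellation is available; what saves us is that every term in the direct expansion still contains $|x|^2$, so the degeneracy structure of the lemma is preserved.
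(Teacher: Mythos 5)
Your proof is correct and is essentially the paper's argument: the integration-by-parts identity you derive for $h=\nabla_\bbG N^p$, followed by Young's inequality on the cross term, is precisely the derivation of the quadratic form bound $\int (\tfrac14\abs{\nabla_\bbG U}^2-\tfrac12\Delta_\bbG U)f^2\,d\mu\le\int\abs{\nabla_\bbG f}^2\,d\mu$ that the paper invokes from \cite[\S1.15.7]{BGL14}, and both proofs then rest on the same pointwise estimates $\abs{\nabla_\bbG N}^2\asymp\abs{x}^2/N^2$ and $\abs{\Delta_\bbG N}\lesssim\abs{x}^2/N^3$ on M\'etivier groups (cited from \cite[Lemma~4.5]{BC17} in the paper, computed by hand in your write-up). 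Your splitting at $\{N\le R\}$ to absorb the lower-order term plays the same role as the paper's observation that the highest-order term $N^{2(p-1)}\abs{\nabla_\bbG N}^2$ dominates, so $V\gtrsim\eta-1$.
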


\begin{proof}
    By association with the Schr\"odinger potential $V = \frac{1}{4}\abs{\nabla_\bbG U}^2 - \frac{1}{2}\Delta_\bbG U$, see \cite[\S1.15.7]{BGL14}, we immediately have $\int V f^2d\mu \leq \int \abs{\nabla_\bbG f}^2d\mu$ where 
    \[ V = \frac{1}{4}p^2N^{2(p-1)}\abs{\nabla_\bbG N}^2 - \frac{1}{2}pN^{p-1}\Delta_\bbG N + \frac{1}{2}p(p-1)N^{p-2}\abs{\nabla_\bbG N}^2. \]
    On a $H$-type group these quantities are explicitly computable where $\abs{\nabla_\bbG N}^2 = \abs{x}^2/N^2$ and $\Delta_\bbG N = (Q(\bbG) - 1) \abs{x}^3/N^3$, so that $V$ is comparable to a polynomial in $N$ whose highest order term is $N^{2(p-1)}$ multiplied by $\abs{\nabla_\bbG N}^2$ and thus we can control it from below up to constants by $\abs{\nabla_\bbG N}^2(N^{2(p-1)} - 1) \gtrsim \eta - 1$. On a M\'etivier group the calculations of \cite[Lemma~4.5]{BC17} show the same is true up to constants. 
\end{proof}

\begin{remark}
    It turns out this potential is one which has ``polynomially thin sublevel sets" in the language of \cite[Theorem~3]{Sim08}, which implies the Schr\"odinger operator $\Delta_\bbG - V$ has discrete spectrum. Actually, some work is required to transfer the original ideas of \cite{Sim08} to the setting of Lie groups, which was done by \cite{BC17}. Thus by exploiting the unitary equivalence between $L = \Delta_\bbG - \nabla_\bbG N^p \cdot \nabla_\bbG$ and $L = \Delta_\bbG - V$, we proceed with the knowledge that a $2$-super-Poincar\'e inequality must exist for the former (whose invariant measure is precisely $\mu$ up to normalisation) by its equivalence with the discrete spectrum of the latter. For more spectral theory of the operators considered by \cite{Sim08}, we refer the reader to \cite{simon1983nonclassical, simon1983some} and also the more recent work \cite{aljahili2023non}.
\end{remark}

We will provide two different proofs of Theorem \ref{thm1}. The first proof exploits H\"older's inequality and a standard Hardy inequality which unfortunately comes with the disadvantage of requiring additional assumptions on the horizontal dimension $2n$ of $\bbG \cong \bbR^{2n} \times \bbR^m$. It will however illustrate the main idea which will inspire the second dimension free proof. Our starting point is a generalisation of a standard Hardy inequality \cite[Theorem~1]{davies1998explicit} to the probability setting whose proof follows \cite[Theorem~2.5]{d2005hardy}. For more Hardy inequalities on stratified Lie groups, we refer the reader to \cite{Amb04a, Amb04, RS17, RS17a, RS19}.

\begin{lemma}\label{lem:hardy(1)}
    Let $\bbG$ be a M\'etivier group and consider the probability measure $d\mu = Z^{-1}e^{-N^p}d\xi$ for $p > 2$ with normalisation constant $Z = \int e^{-N^p}d\xi$, Kaplan norm $N$, and Lebesgue measure $d\xi$ on $\bbG$. If $\Delta_\bbG V \geq 0$ then for $r \in (1, 2]$ it holds  
    \begin{equation}\label{lqhardygeneral}
        \int \abs{\Delta_\bbG V} \abs{f}^r d\mu \lesssim \int \frac{\abs{\nabla_\bbG V}^r}{\abs{\Delta_\bbG V}^{r-1}} \abs{\nabla_\bbG f}^rd\mu + \int \frac{\abs{\nabla_\bbG U \cdot \nabla_\bbG V}^r}{\abs{\Delta_\bbG V}^{r-1}} \abs{f}^rd\mu. 
    \end{equation}
\end{lemma}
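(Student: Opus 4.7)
The plan is to derive \eqref{lqhardygeneral} via integration by parts against the measure $d\mu = Z^{-1}e^{-U}d\xi$ followed by H\"older's inequality with conjugate exponents $r$ and $r/(r-1)$, in the spirit of \cite[Theorem~2.5]{d2005hardy}. By a standard density argument it suffices to work with $f \geq 0$ smooth and compactly supported.

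Since each $X_i$ in the first stratum is divergence-free with respect to the Lebesgue measure $d\xi$ on $\bbG$, integration by parts against $d\xi$ yields
\[ \int (\Delta_\bbG V) f^r e^{-U} d\xi = -\int \nabla_\bbG V \cdot \nabla_\bbG(f^r e^{-U}) d\xi. \]
Expanding the right-hand side by Leibniz rule (using $\nabla_\bbG e^{-U} = -e^{-U}\nabla_\bbG U$ and $\nabla_\bbG f^r = r f^{r-1}\nabla_\bbG f$) and reabsorbing the factor $e^{-U}$ into $d\mu$ produces the identity
\[ \int (\Delta_\bbG V) f^r d\mu = -r \int (\nabla_\bbG V \cdot \nabla_\bbG f) f^{r-1} d\mu + \int (\nabla_\bbG U \cdot \nabla_\bbG V) f^r d\mu. \]
The hypothesis $\Delta_\bbG V \geq 0$ removes the absolute value on the left, and pointwise Cauchy-Schwarz applied to each inner product on the right then gives
\[ \int (\Delta_\bbG V) f^r d\mu \leq r \int |\nabla_\bbG V| \, |\nabla_\bbG f| \, f^{r-1} d\mu + \int |\nabla_\bbG U \cdot \nabla_\bbG V| f^r d\mu. \]

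I would then apply H\"older's inequality with exponents $r$ and $r/(r-1)$ to each term on the right, splitting each integrand so that the second factor is $(\Delta_\bbG V)^{(r-1)/r} f^{r-1}$ and the first factor carries the remaining weight. Writing $D$, $A$, $B$ for the three integrals appearing in \eqref{lqhardygeneral} (with $D$ on the left-hand side), the H\"older step produces
\[ D \leq r\, A^{1/r} D^{(r-1)/r} + B^{1/r} D^{(r-1)/r}. \]
Provided $D < \infty$, dividing by $D^{(r-1)/r}$ and using $(a+b)^r \lesssim a^r + b^r$ yields $D \lesssim A + B$, which is exactly \eqref{lqhardygeneral}.

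The main technical point is the division by $D^{(r-1)/r}$: if $D = \infty$ the argument collapses tautologically, and if $\Delta_\bbG V$ vanishes on a set of positive measure the ratios $|\nabla_\bbG V|^r/(\Delta_\bbG V)^{r-1}$ must be read with the convention $0/0 = 0$, which is compatible with the H\"older split once one restricts the initial integration to $\{\Delta_\bbG V > 0\}$ (where the left-hand integrand is supported anyway). Finiteness of $D$ is automatic for compactly supported smooth $f$ by local boundedness of $V$ and $U$, and the extension to $f \in \cD(\cE)$ is then handled by density and monotone convergence.
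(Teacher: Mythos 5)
Your proposal is correct and follows essentially the same route as the paper: integration by parts of $\Delta_\bbG V$ against $f^r e^{-U}d\xi$ to produce the gradient and drift terms, followed by a H\"older split with weight $(\Delta_\bbG V)^{(r-1)/r}f^{r-1}$ and absorption of the resulting factor $D^{(r-1)/r}$ (the paper absorbs via Young's inequality rather than dividing, which is the same step in disguise and likewise needs $D<\infty$). The only difference is that the paper first records the identity for a general vector field $h$ and weight $\omega$ (its equation \eqref{q-gen-hardy}), since that generality is reused elsewhere, and then specialises to $h=\nabla_\bbG V$, $\omega\equiv 1$.
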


\begin{proof}
    We use a standard integration by parts argument. If $L = -\sum_{i=1}^\ell X_i^* X_i = \Div \cdot \nabla$ where $\nabla = (X_1, \cdots, X_\ell)$ are the possibly noncommuting vector fields comprising $L$, then integrating by parts yields 
    \[ \int \abs{f}^r \Div h d\xi = -r \int \abs{f}^{r-2}f \nabla f \cdot h d\xi \]
    for $f$ smooth and compactly supported and $h$ a sufficiently regular vector field. There is a generalisation of this formula when Lebesgue measure $d\xi$ is replaced with a density $d\mu = e^{-U}d\xi$, namely by inserting $e^{-U}$ into the left hand side, integrating by parts, and applying the product rule, we obtain 
    \[ \int \abs{f}^r \Div h(e^{-U}d\xi) = -r\int \abs{f}^{r-2}f \nabla f \cdot h(e^{-U}d\xi) + \int \abs{f}^r \nabla U \cdot h(e^{-U}d\xi). \]
    We will in fact need another generalisation when $d\xi$ is replaced with $\omega d\mu = \omega e^{-U}d\xi$, which we document here for convenience. 
    \begin{equation}\label{q-gen-hardy}
        \int \abs{f}^r \Div h \omega d\mu = -r \int \abs{f}^{r-2}f\nabla f \cdot h\omega d\mu + \int \abs{f}^r \nabla U \cdot h \omega d\mu - \int \abs{f}^r \nabla \omega \cdot h d\mu. 
    \end{equation}
    To recover \eqref{lqhardygeneral}, it suffices to take $\omega \equiv 1$ and, assuming $\Div h$ is nonnegative, writing $A_h = \Div h$ and $s = (r-1)/r$ we find
        \begin{align*}
        \int \abs{f}^rA_h d\mu &\lesssim \int \abs{f}^{r-1}\abs{\nabla f}\abs{h} d\mu + \int \abs{f}^r\nabla U \cdot h d\mu 
        \vphantom{\left(\int \abs{f}^rA_hd\mu\right)^r\left(\left(\int \frac{\abs{h}^r}{A_h^{r-1}}\abs{\nabla f}^rd\mu\right)^{1/r} + \left(\int \abs{f}^r\frac{\abs{\nabla U \cdot h}^r}{A_h^{r-1}}\right)^{1/r}\right) }
        \\ 
        &\leq \int \abs{f}^{r-1}A_h^s\frac{\abs{\nabla f}\abs{h}}{A_h^s} d\mu + \int \abs{f}^{r-1}A_h^s \frac{\abs{f}\abs{\nabla U \cdot h}}{A_h^s}d\mu  
        \vphantom{\left(\int \abs{f}^rA_hd\mu\right)^r\left(\left(\int \frac{\abs{h}^r}{A_h^{r-1}}\abs{\nabla f}^rd\mu\right)^{1/r} + \left(\int \abs{f}^r\frac{\abs{\nabla U \cdot h}^r}{A_h^{r-1}}\right)^{1/r}\right) }
        \\
        &\lesssim \left(\int \abs{f}^rA_hd\mu\right)^r\left(\left(\int \frac{\abs{h}^r}{A_h^{r-1}}\abs{\nabla f}^rd\mu\right)^{1/r} + \left(\int \abs{f}^r\frac{\abs{\nabla U \cdot h}^r}{A_h^{r-1}}\right)^{1/r}\right) 
    \end{align*}
    from which it follows 
    \[ \int A_h \abs{f}^r d\mu \lesssim \int \frac{\abs{h}^r}{A_h^{r-1}}\abs{\nabla f}^rd\mu + \int \frac{\abs{\nabla U \cdot h}^r}{A_h^{r-1}} \abs{f}^rd\mu \]
    by Young's inequality. Specialising to $L = \Delta_\bbG = -\nabla_\bbG^* \cdot \nabla_\bbG$, if $\Delta_\bbG V$ is nonnegative then choosing $h = \nabla_\bbG V$ yields \eqref{lqhardygeneral}. 
\end{proof}

\begin{proof}[Proof of Theorem \ref{thm1} via H\"older's inequality]
    Without loss of generality assume $f \geq 0$ and let $B_R = \{x \in \bbG \mid N(x) \leq R\}$ be the $N$-ball of radius $R$ centred at the origin. We control the integral over $B_R$ in exactly the same way as was done in the nondegenerate setting. On $B_R$, we exploit its compactness by first applying the super-Poincar\'e inequality for Lebesgue measure \eqref{eq:qspilocal}
    \[ \int_{B_R} f^2d\mu \lesssim \delta\int_{B_R} \abs{\nabla_\bbG f}^2d\mu + \delta \int_{B_R} \abs{\nabla_\bbG U}^2f^2d\mu + \tilde{\beta}_2(\delta)\left(\int_{B_R} fe^{-U/2}d\xi\right)^2 \]
    and then multiplying and dividing by $e^{-U/2}$ in the $L^1$-defect, giving 
    \begin{equation}\label{thm1eq1}
        \int_{B_R} f^2d\mu \lesssim \delta\int \abs{\nabla_\bbG f}^2 + \delta R^{2(p-1)} \int f^2d\mu + e^{R^p}\tilde{\beta}_2(\delta)\left(\int fd\mu\right)^2
    \end{equation}
    since $\abs{\nabla_\bbG U}^2 \lesssim \eta \leq R^{2(p-1)}$ on $B_R$. On $B_R^c$, we introduce the following Hardy inequality. Taking $r = 2$ and $V = \log \abs{x}$ in \eqref{lqhardygeneral} and assuming the horizontal dimension $2n$ of $\bbG \cong \bbR^{2n} \times \bbR^m$ is greater than $2$, we have 
    \[ \int \frac{f^2}{\abs{x}^2}d\mu \lesssim \int \abs{\nabla_\bbG f}^2d\mu + \int \abs{x}^6N^{2(p-4)}f^2d\mu \lesssim \int \abs{\nabla_\bbG f}^2d\mu + \int f^2d\mu \]
    since on M\'etivier groups one has $\nabla_\bbG N \cdot \nabla_\bbG \abs{x} \simeq \abs{x}^3/N^3$ following the proof of \cite[Lemma~4.5.6]{Ing10} and $\abs{x}^6N^{2(p-4)} \leq \eta$. By H\"older's inequality we find 
    \begin{align*}
        \int_{B_R^c} f^2d\mu &\leq \left(\int_{B_R^c} \frac{f^2}{\abs{x}^2}d\mu\right)^{1/2}\left(\int_{B_R^c} \abs{x}^2f^2d\mu\right)^{1/2} \vphantom{\left(\int_{B_R^c} \abs{\nabla_\bbG f}^2d\mu + \int_{B_R^c} f^2d\mu\right)^{1/2} \left(\frac{1}{R^{2(p-2)}} \int_{B_R^c} \abs{x}^2N^{2(p-2)}f^2d\mu\right)^{1/2}} \\
        &\lesssim \left(\int_{B_R^c} \abs{\nabla_\bbG f}^2d\mu + \int_{B_R^c} f^2d\mu\right)^{1/2} \left(\frac{1}{R^{2(p-2)}} \int_{B_R^c} \abs{x}^2N^{2(p-2)}f^2d\mu\right)^{1/2} \\
        &\lesssim \frac{1}{R^{p-2}}\left(\int \abs{\nabla_\bbG f}^2d\mu + \int f^2d\mu\right)^{1/2}\left(\int \eta f^2d\mu\right)^{1/2} \vphantom{\left(\int_{B_R^c} \abs{\nabla_\bbG f}^2d\mu + \int_{B_R^c} f^2d\mu\right)^{1/2} \left(\frac{1}{R^{2(p-2)}} \int_{B_R^c} \abs{x}^2N^{2(p-2)}f^2d\mu\right)^{1/2}} \\
        &\lesssim \frac{1}{R^{p-2}} \left(\int \abs{\nabla_\bbG f}^2d\mu + \int f^2d\mu\right) \vphantom{\left(\int_{B_R^c} \abs{\nabla_\bbG f}^2d\mu + \int_{B_R^c} f^2d\mu\right)^{1/2} \left(\frac{1}{R^{2(p-2)}} \int_{B_R^c} \abs{x}^2N^{2(p-2)}f^2d\mu\right)^{1/2}}
    \end{align*} 
    It follows
    \[ \int f^2d\mu \lesssim \left(\delta + \frac{1}{R^{p-2}}\right)\int \abs{\nabla_\bbG f}^2d\mu + e^{R^p}\tilde{\beta}_2(\delta)\left(\int fd\mu\right)^2 + \left(\delta R^{2(p-1)} + \frac{1}{R^{p-2}}\right)\int f^2d\mu \]
    and as before it suffices to choose $R$ large so that $R^{-(p-2)}$ is small and comparable to $\epsilon$, namely $R = \epsilon^{-1/(p-2)}$. We may choose $\delta$ as any positive power of $\epsilon$ such that $\delta R^{2(p-1)} = \cO(\epsilon)$ since, in any case, the asymptotics of $\beta_2$ appear only through the estimate on $R$ since the dependence on $\delta$ is polynomial in $\epsilon$. This yields 
    \[ \int f^2d\mu \lesssim \epsilon \int \abs{\nabla_\bbG f}^2d\mu + e^{\epsilon^{-p/(p-2)}}(1 + \delta^{-Q(\bbG)/2})\left(\int fd\mu\right)^2 \]
    and therefore the expected asymptotic behaviour $\beta_2(\epsilon) \sim \exp(C\epsilon^{-p/(p-2)})$.
\end{proof}

It remains to provide an alternative proof when the horizontal dimension is at most $2$ where $\abs{x}^{-2}$ is not locally integrable and the Hardy inequality does not hold. Roughly speaking, the idea for a dimension free proof is based on a linearised version of the proof via H\"older's inequality. By the scalar inequality $1 \lesssim \epsilon \abs{x}^{-2} + \epsilon^{-1}\abs{x}^2$ we deduce if $2n > 2$ that
\[ \int f^2d\mu \lesssim \int f^2\left(\frac{\epsilon}{\abs{x}^2} + \frac{\abs{x}^2}{\epsilon}\right)d\mu \lesssim \epsilon \int \left(\abs{\nabla_\bbG f}^2d\mu + \int \eta f^2d\mu\right) + \frac{1}{\epsilon} \int \abs{x}^2f^2d\mu . \]
This suggests what is needed is actually a Hardy inequality of the form
\[
    \int f^2d\mu \lesssim \epsilon \int \abs{\nabla_\bbG f}^2d\mu + \frac{1}{\epsilon} \int \abs{x}^2f^2d\mu
\]
whose proof can sidestep the integral $\int \frac{f^2}{\abs{x}^2}d\mu$. 

\begin{proof}[Proof of Theorem \ref{thm1}, dimension free]
    We return to \eqref{q-gen-hardy} with $r = 2$, $\omega = 1$, and $h = x$ so that $\Div h = 2n$. By Young's inequality we find 
    \begin{align*}
        \int f^2d\mu &\lesssim \epsilon \int \abs{\nabla_\bbG f}^2d\mu + \frac{1}{\epsilon} \int \abs{x}^2f^2d\mu + \int \abs{x}^4N^{p-4}f^2d\mu \\
        &\lesssim \epsilon \int \abs{\nabla_\bbG f}^2d\mu + \frac{1}{\epsilon} \int \abs{x}^2f^2d\mu 
    \end{align*}
    since by Young's inequality again $\abs{x}^4 N^{p-4} \leq \abs{x}N^{p-2} \cdot \abs{x} \lesssim \epsilon \eta + \frac{1}{\epsilon} \abs{x}^2$.
    The proof now goes as before on $B_R$, while on $B_R^c$ we obtain 
    \begin{align*}
        \int_{B_R^c} f^2d\mu &\lesssim \epsilon \int_{B_R^c} \abs{\nabla_\bbG f}^2d\mu + \frac{1}{\epsilon} \int \abs{x}^2f^2d\mu \\
        &\lesssim \epsilon \int \abs{\nabla_\bbG f}^2d\mu + \frac{1}{\epsilon}\frac{1}{R^{2(p-2)}} \int_{B_R^c} \abs{x}^2N^{2(p-2)}f^2d\mu \\
        &\lesssim \epsilon \int \abs{\nabla_\bbG f}^2d\mu + \frac{1}{\epsilon}\frac{1}{R^{2(p-2)}} \int \eta f^2d\mu \\
        &\lesssim \left(\epsilon + \frac{1}{\epsilon} \frac{1}{R^{2(p-2)}}\right) \int \abs{\nabla_\bbG f}^2d\mu + \frac{1}{\epsilon} \frac{1}{R^{2(p-2)}} \int f^2d\mu. 
    \end{align*}
    Choosing $R = \epsilon^{-1/(p-2)}$ makes all constants comparable to $\epsilon$ and finishes the proof. 
\end{proof}

\begin{remark}
    Were $d\mu$ replaced with $d\xi$ the constants $\epsilon$ and $\frac{1}{\epsilon}$ can be predicted by a standard scaling argument. But in the probability setting this is not available (since $d\mu$ is not scale invariant). 
\end{remark}

\begin{remark}
    This $2$-super-Poincar\'e inequality does not recover the logarithmic Sobolev inequality since $p/(p-2) > 1$ for $p > 2$, in contrast to the $2$-super-Poincar\'e inequality for Carnot-Carath\'eodory distance. This is not a weakness of our methods since, as mentioned in the introduction, a logarithmic Sobolev inequality is impossible for such measures due to \cite[Theorem~6.3]{HZ09}.
\end{remark}

\subsection{M\'etivier groups: the $L^q$-setting}
One may expect, since measures $Z^{-1}e^{-d^p}d\xi$ defined in terms of Carnot-Carath\'eodory distance satisfy the $q$-logarithmic Sobolev inequality with $q \in (1, 2]$ H\"older conjugate to $p \geq 2$ by \cite[Theorem~4.1]{HZ09}, that the stronger $q$-super-Poincar\'e inequality should analogously hold for measures $d\mu = Z^{-1}e^{-N^p}d\xi$ defined in terms of Kaplan norm. Indeed, this is true, and as expected, boils down to the super-Poincar\'e inequality \eqref{eq:qspilocal} together with a $U$-bound and a Hardy inequality. Although the $U$-bound 
\begin{equation}\label{uboundinglis}
    \int \eta\abs{f}^qd\mu \vcentcolon= \int \abs{x}^2N^{p-2}\abs{f}^qd\mu \lesssim \int \abs{\nabla_\bbG f}^qd\mu + \int \abs{f}^qd\mu
\end{equation}
has already been provided by \cite[Lemma~4.20]{inglis2012spectral}, we will actually prove a stronger $U$-bound (which leads to stronger results). 

It turns out that one can follow the proof with the weaker $U$-bound \eqref{uboundinglis} but it will be ``wrong" in the sense the growth of $\beta_q$ will not be the optimal one claimed by Theorem \ref{thm2}. According to \cite[Lemma~4.11]{inglis2012spectral} one can multiply the exponent of $\epsilon$ in a $q$-super-Poincar\'e inequality by $q/2$ to obtain the exponent in a $2$-super-Poincar\'e inequality. For instance, in the case of the Carnot-Carath\'eodory distance, the correct $q$-super-Poincar\'e inequality has exponent $1$ according to \cite[Theorem~4.10]{inglis2012spectral}, that is $\beta_q$ grows like $\beta_q(\epsilon) \sim \exp(C\epsilon^{-1})$ and multiplication $1 \cdot q/2 = p/(2(p-1))$ recovers the exponent we found in \S\ref{S2.1}. 

\begin{lemma}\label{lem:inglisuboundbetter}
   Let $\bbG$ be a M\'etivier group and consider the probability measure $d\mu = Z^{-1}e^{-N^p}d\xi$ for $p > 2$ with normalisation constant $Z = \int e^{-N^p}d\xi$, Kaplan norm $N$, and Lebesgue measure $d\xi$ on $\bbG$. Then with $q$ H\"older conjugate to $p$ it holds
   \begin{equation}\label{eq:inglisuboundimproved}
        \int \eta \abs{f}^q d\mu \vcentcolon= \int \abs{x}^qN^{p-q}\abs{f}^q d\mu \lesssim \int \abs{\nabla_\bbG f}^q d\mu + \int \abs{f}^qd\mu.
    \end{equation}
\end{lemma}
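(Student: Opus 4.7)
The plan is to prove the equivalent statement $\int |\nabla_\bbG U|^q |f|^q d\mu \lesssim \int |\nabla_\bbG f|^q d\mu + \int |f|^q d\mu$, which coincides with \eqref{eq:inglisuboundimproved} up to a constant via the identity $|\nabla_\bbG U|^q = p^q |x|^q N^{(p-2)q} = p^q |x|^q N^{p-q}$, where the second equality uses the H\"older conjugacy $(p-2)q = p-q$ and the first holds up to constants on M\'etivier groups by \cite[Lemma~4.5]{BC17}. Viewed this way, the claim is the $L^q$ analogue of the Schr\"odinger-type bound $\int |\nabla_\bbG U|^2 f^2 d\mu \lesssim \int |\nabla_\bbG f|^2 d\mu + \int f^2 d\mu$ underpinning Lemma \ref{lem:inglisubound}, and a natural strengthening of the weaker U-bound \eqref{uboundinglis}.

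The argument I have in mind is a single integration by parts with the specific choice $h = |\nabla_\bbG U|^{q-2} \nabla_\bbG U$, engineered so that $h \cdot \nabla_\bbG U = |\nabla_\bbG U|^q$. Applying \eqref{q-gen-hardy} with $r = q$, $\omega \equiv 1$, and this $h$ yields the identity
\[
\int |f|^q |\nabla_\bbG U|^q d\mu = \int |f|^q \Div\!\left(|\nabla_\bbG U|^{q-2}\nabla_\bbG U\right) d\mu + q \int |f|^{q-2} f\, |\nabla_\bbG U|^{q-2} \nabla_\bbG U \cdot \nabla_\bbG f\, d\mu.
\]
For the last integral, bounding the integrand by $q |f|^{q-1} |\nabla_\bbG U|^{q-1} |\nabla_\bbG f|$ and applying Young's inequality with H\"older exponents $p, q$ and a tuning parameter $\delta$ produces $(q\delta/p) \int |f|^q |\nabla_\bbG U|^q d\mu + C_\delta \int |\nabla_\bbG f|^q d\mu$ by virtue of $(q-1)p = q$. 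Since $q/p = 1/(p-1) < 1$, any $\delta < p/q$ suffices for the $|\nabla_\bbG U|^q$ piece to be absorbed into the left-hand side after rearranging.

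The crux, and what I expect to be the main obstacle, is showing that $\Div(|\nabla_\bbG U|^{q-2} \nabla_\bbG U)$ is uniformly bounded on $\bbG$. Expanding via the product rule gives
\[
\Div\!\left(|\nabla_\bbG U|^{q-2} \nabla_\bbG U\right) = |\nabla_\bbG U|^{q-2} \Delta_\bbG U + \tfrac{q-2}{2} |\nabla_\bbG U|^{q-4} \nabla_\bbG |\nabla_\bbG U|^2 \cdot \nabla_\bbG U,
\]
and on an $H$-type group the explicit identities $|\nabla_\bbG N|^2 = |x|^2/N^2$, $\Delta_\bbG N = (Q-1)|x|^2/N^3$, and $\nabla_\bbG N \cdot x = |x|^4/N^3$, together with the automatic cancellation of the antisymmetric part of the second summand (since $[X_j, X_k] N$ is antisymmetric in $j,k$), allow both summands to collapse to constant multiples of $(|x|/N)^q \leq 1$ after differentiating $|\nabla_\bbG N|^2 = |x|^2/N^2$ and simplifying. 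On a general M\'etivier group these identities degrade to estimates up to constants by \cite[Lemma~4.5]{BC17}, the precise level of deviation already exploited to pass from $H$-type to M\'etivier in the proof of Lemma \ref{lem:inglisubound}. Substituting the resulting bound produces $(1 - q\delta/p) \int |f|^q |\nabla_\bbG U|^q d\mu \lesssim \int |f|^q d\mu + C_\delta \int |\nabla_\bbG f|^q d\mu$, and rearranging delivers \eqref{eq:inglisuboundimproved}.
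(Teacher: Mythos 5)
Your proof is essentially the paper's proof in different notation: since $(p-1)(q-1)=1$, your vector field $h=\abs{\nabla_\bbG U}^{q-2}\nabla_\bbG U$ equals $p^{q-1}N\abs{\nabla_\bbG N}^{q-2}\nabla_\bbG N$, which is exactly the multiplier the paper pairs with $\nabla_\bbG(f^qe^{-U})$; the integration-by-parts identity, the Young absorption of the gradient term using $(q-1)p=q$, and the reduction to a uniform bound on $\Div h$ all match.

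One caveat for the general M\'etivier case. Your expansion of $\Div h$ contains $\abs{\nabla_\bbG U}^{q-4}\,\nabla_\bbG\abs{\nabla_\bbG U}^2\cdot\nabla_\bbG U$, which after the antisymmetric cancellation you note reduces to controlling $\nabla_\bbG N\cdot\nabla_\bbG\abs{\nabla_\bbG N}^2$. On $H$-type groups this is fine (indeed it vanishes, since $\abs{\nabla_\bbG N}^2=\abs{x}^2/N^2$ exactly), but on a general M\'etivier group the two-sided comparison $\abs{\nabla_\bbG N}^2\simeq\abs{x}^2/N^2$ from \cite[Lemma~4.5]{BC17} gives no information about the \emph{derivative} of $\abs{\nabla_\bbG N}^2$, so "the identities degrade to estimates up to constants" does not by itself justify this step. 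The paper avoids the issue by substituting the explicit weight $\abs{x}^{q-2}N^{2-q}$ for $\abs{\nabla_\bbG N}^{q-2}$ \emph{before} differentiating, i.e.\ it works with the vector field $\abs{x}^{q-2}N^{3-q}\nabla_\bbG N$, whose divergence only involves $\Delta_\bbG N$, $\abs{\nabla_\bbG N}^2$ and $\nabla_\bbG\abs{x}\cdot\nabla_\bbG N$, all of which admit the needed one-sided bounds. Making the same substitution in your choice of $h$ repairs the argument without changing anything else.
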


\begin{proof}
    Without loss of generality assume $f \geq 0$. We follow the proof of \cite[Lemma~4.20]{inglis2012spectral} but instead of taking the inner product of $\nabla_\bbG f^qe^{-U}$ with $\abs{x}^{-1}N\nabla_\bbG N$, we take inner product with $N\abs{\nabla_\bbG N}^2\nabla_\bbG N = \abs{x}^{q-2}N^{3-q}\nabla_\bbG N$ and integrate with respect to Lebesgue measure. This yields 
    \[
        p \int f^q \frac{N^{p-q+2}}{\abs{x}^{2-q}}\abs{\nabla_\bbG N}^2d\mu = \int \frac{N^{3-q}}{\abs{x}^{2-q}}\nabla_\bbG N \cdot \nabla_\bbG f^qd\mu - \int \frac{N^{3-q}}{\abs{x}^{2-q}} \nabla_\bbG N \cdot \nabla_\bbG (f^qe^{-N^p})d\xi. 
    \]
    The left hand side coincides with the left hand side of \eqref{eq:inglisuboundimproved}, that is 
    \[ p \int f^q \frac{N^{p-q+2}}{\abs{x}^{2-q}}\abs{\nabla_\bbG N}^2d\mu = p\int \abs{x}^qN^{p-q}f^qd\mu. \]
    The first addend on the right hand side is controlled by Cauchy-Schwarz and Young's inequality, since
    \begin{align*}
        \int \frac{N^{3-q}}{\abs{x}^{2-q}}\nabla_\bbG N \cdot \nabla_\bbG f^qd\mu &\leq \int \frac{N^{3-q}}{\abs{x}^{2-q}} \abs{\nabla_\bbG N} f^{q-1} \abs{\nabla_\bbG f}d\mu \\
        &= \int \abs{x}^{q-1}N^{2-q} f^{q-1} \abs{\nabla_\bbG f} d\mu \\
        &\lesssim \tau \int \abs{x}^q N^{p-q} f^q d\mu + \frac{1}{\tau} \int \abs{\nabla_\bbG f}^q d\mu 
    \end{align*}
    since $p = q/(q-1)$ and hence $(\abs{x}^{q-1}N^{2-q})^{q/(q-1)} = \abs{x}^qN^{p-q}$. Finally, the second addend on the right hand side can be controlled by integrating by parts, since 
    \[ \int \frac{N^{3-q}}{\abs{x}^{2-q}}\nabla_\bbG N \cdot \nabla_\bbG (f^qe^{-N^p})dx = \int \nabla_\bbG \cdot \left(\frac{N^{3-q}}{\abs{x}^{2-q}}\nabla_\bbG N\right)f^qd\mu \]
    and 
    \begin{align*}
        \nabla_\bbG \cdot \left(\frac{N^{3-q}}{\abs{x}^{2-q}}\nabla_\bbG N\right) &= \frac{N^{3-q}}{\abs{x}^{2-q}} \Delta_\bbG N + \frac{1}{\abs{x}^{2-q}}\nabla_\bbG N^{3-q} \cdot \nabla_\bbG N + N^{3-q} \nabla_\bbG \frac{1}{\abs{x}^{2-q}} \cdot \nabla_\bbG N \\
        &\lesssim \frac{N^{-q}}{\abs{x}^{-q}} + \frac{N^{2-q}}{\abs{x}^{2-q}}\abs{\nabla_\bbG N}^2 + \frac{N^{3-q}}{\abs{x}^{3-q}}\nabla_\bbG \abs{x} \cdot \nabla_\bbG N \\ 
        &\lesssim \frac{N^{-q}}{\abs{x}^{-q}} \vphantom{\nabla_\bbG \cdot \left(\frac{N^{3-q}}{\abs{x}^{2-q}}\nabla_\bbG N\right)} \\
        &\lesssim 1 \vphantom{\nabla_\bbG \cdot \left(\frac{N^{3-q}}{\abs{x}^{2-q}}\nabla_\bbG N\right)}. 
    \end{align*}
    Thus taking $\tau$ sufficiently small and rearranging terms we are done. 
 \end{proof}

As before, we provide two different proofs of Theorem \ref{thm2}. 

\begin{proof}[Proof of Theorem \ref{thm2} via H\"older's inequality]
    The argument follows the same ideas as before. For the estimate over $B_R$, we again apply \eqref{eq:qspilocal} to $fe^{-U/q}$ and since $\abs{\nabla_\bbG U}^q = \abs{x}^qN^{p-q} \leq R^p$ 
    the analogue of \eqref{thm1eq1} is 
    \[ \int_{B_R} f^qd\mu \lesssim \delta \int \abs{\nabla_\bbG f}^qd\mu + e^{R^p}\tilde{\beta}_q(\delta)\left(\int f^{q/2}d\mu\right)^2 + \delta R^p \int f^qd\mu. \]
    For the estimate over $B_R^c$, taking $(1, 2) \ni q$ H\"older conjugate to $p > 2$ and $V = \abs{x}^{2-q}$ in \eqref{lqhardygeneral}, then since the horizontal dimension $2n$ is at least $2 > q$, we have 
    \[ \int \frac{f^q}{\abs{x}^q}d\mu \lesssim \int \abs{\nabla_\bbG f}^qd\mu + \int \abs{x}^{3q}N^{p-3q}f^qd\mu \lesssim \int \abs{\nabla_\bbG f}^qd\mu + \int f^qd\mu \]
    since $\abs{x}^{3q}N^{p-3q} \leq \eta$. By H\"older's inequality we find 
    \begin{align*}
        \int_{B_R^c} f^qd\mu &\leq \left(\int_{B_R^c} \frac{f^q}{\abs{x}^q}d\mu\right)^{1/2} \left(\int_{B_R^c} \abs{x}^qf^qd\mu\right)^{1/2} \\
        &\lesssim \left(\int \abs{\nabla_\bbG f}^qd\mu + \int f^qd\mu\right)^{1/2} \left(\frac{1}{R^{p-q}} \int_{B_R^c} \abs{x}^qN^{p-q}f^qd\mu\right)^{1/2} \\
        &\lesssim \frac{1}{R^{(p-q)/2}} \left(\int \abs{\nabla_\bbG f}^qd\mu + \int f^qd\mu\right). 
    \end{align*}
    Choosing $R = \epsilon^{-2/(p-q)}$ and $\delta$ a suitable positive power of $\epsilon$ and following previous arguments finishes the proof with the expected asymptotic behaviour on $\beta_q$. 
\end{proof}

\begin{proof}[Proof of Theorem \ref{thm2}, dimension free]
    Although the horizontal dimension of a M\'etivier group is at least $2 > q$ and the standard $L^q$-Hardy inequality holds, the scalar inequality $1 \lesssim \epsilon \abs{x}^{-q} + \epsilon^{-1} \abs{x}^q$ suggests the dimension free proof we provided in the $L^2$-setting can be transferred to the $L^q$-setting and would provide a generalisation to horizontal dimension $d \geq 1$. We return to \eqref{q-gen-hardy} with $\omega = \abs{x}^{q-2}$ and $h = x$. By Young's inequality we find for $\alpha > 0$
    \begin{align*}
        (2n + q - 2) \int \frac{\abs{f}^q}{\abs{x}^{2-q}}d\mu &= -q \int \abs{x}^{q-2} \abs{f}^{q-2}f \nabla_\bbG f \cdot x + \int \abs{x}^{q-2} \abs{f}^q \nabla_\bbG U \cdot x d\mu \\
        &\lesssim \alpha^q \int \abs{\nabla_\bbG f}^qd\mu + \frac{1}{\alpha^{q/(q-1)}} \int \abs{x}^q \abs{f}^q d\mu
    \end{align*}
    since $\abs{x}^{q-2}\nabla_\bbG U  \cdot h \leq \abs{\nabla_\bbG U} \cdot \abs{x}^{q-1}$ and $\abs{\nabla_\bbG U}^q \lesssim \eta$. Were the horizontal dimension $2n$ instead $d \geq 1$, then $d + q - 2 > 0$ and $\abs{x}^{q-2}$ is locally integrable since $q - 2 > -1$. Note finally that
    \begin{align*}
        \int \abs{f}^qd\mu &\lesssim \int \abs{f}^q \left(\frac{\delta^{2-q}}{\abs{x}^{2-q}} + \frac{\abs{x}^q}{\delta^q}\right)d\mu \\&\lesssim \int \abs{\nabla_\bbG f}^qd\mu + \int \abs{x}^q\abs{f}^qd\mu \\
        &\lesssim \alpha^q \delta^{2-q} \int \abs{\nabla_\bbG f}^qd\mu + \frac{1}{\alpha^{q/(q-1)}}\delta^{2-q} \int \abs{x}^q\abs{f}^qd\mu + \frac{1}{\delta^q} \int \abs{x}^q\abs{f}^qd\mu 
    \end{align*}
    by the scalar inequality $1 \lesssim \epsilon^{2-q}\abs{x}^{q-2} + \epsilon^{-q}\abs{x}^q$. Choosing $\alpha = \epsilon^{2(q-1)/q^2}$ and $\delta = \epsilon^{1/q}$ yields 
    \[ \int \abs{f}^qd\mu \lesssim \epsilon \int \abs{\nabla_\bbG f}^q + \frac{1}{\epsilon} \int \abs{x}^q\abs{f}^qd\mu. \]
    On $B_R^c$ we obtain 
    \begin{align*}
        \int_{B_R^c} \abs{f}^qd\mu &\lesssim \epsilon \int_{B_R^c} \abs{\nabla_\bbG f}^qd\mu + \frac{1}{\epsilon} \int \abs{x}^q\abs{f}^qd\mu \\
        &\lesssim \epsilon \int \abs{\nabla_\bbG f}^qd\mu + \frac{1}{\epsilon}\frac{1}{R^{p-q}} \int_{B_R^c} \abs{x}^qN^{p-q}\abs{f}^qd\mu \\
        &\lesssim \epsilon \int \abs{\nabla_\bbG f}^qd\mu + \frac{1}{\epsilon}\frac{1}{R^{p-q}} \int \eta \abs{f}^qd\mu \\
        &\lesssim \left(\epsilon + \frac{1}{\epsilon} \frac{1}{R^{p-q}}\right) \int \abs{\nabla_\bbG f}^qd\mu + \frac{1}{\epsilon} \frac{1}{R^{p-q}} \int \abs{f}^qd\mu. 
    \end{align*}
    Choosing $R$ as before finishes the proof. 
\end{proof}

\subsection{The Grushin and Heisenberg-Greiner setting}

In this final chapter, we show how the methods presented in this paper can also be applied to probability measures defined in terms of the Grushin and Heisenberg-Greiner operators together with their respective Kaplan norms. Although these operators cannot in general be realised as the sublaplacian on a stratified Lie group, they have a similar structure and generalise the euclidean and Heisenberg settings respectively. To our best knowledge, $U$-bounds for these operators, although a somewhat straightforward generalisation of the previous computations, have not yet appeared in the literature, whilst Hardy inequalities with respect to Lebesgue measure can be found in \cite{Amb04a} and \cite{Amb05} respectively. 

In the Grushin setting where the subgradient and sublaplacian are respectively (see \cite{Amb04a} for more details)
\[ \nabla_\gamma = (\nabla_x, \abs{x}^\gamma \nabla_y) \Mand \Delta_\gamma = \nabla_\gamma \cdot \nabla_\gamma \] 
for $\gamma \geq 0$ acting on $\bbR^n_x \times \bbR^m_y$ and the Kaplan norm is 
\[ N(x, y) = \left(\abs{x}^{2(1+\gamma)} + (1 + \gamma)^2\abs{y}^2\right)^{1/(2+2\gamma)}, \]
it is readily verified 
\begin{equation}\label{grushinestimates}
    \abs{\nabla_\gamma N} = \frac{\abs{x}^\gamma}{N^\gamma}, \quad \Delta_\gamma N = C(n, m, \gamma) \frac{\abs{x}^{2\gamma}}{N^{2\gamma+1}}, \quad \nabla_\gamma N \cdot \nabla_\gamma \abs{x} = \frac{\abs{x}^{2\gamma+1}}{N^{2\gamma+1}}
\end{equation}
for $C(n, m, \gamma) = n + (1 + \gamma)m$. This constant plays the role of the homogeneous dimension; it may be regarded as a sum of the topological dimension of $\bbR^n_x$ and of $\bbR^m_y$ weighted by $1 + \gamma$. These explicit formulas allow us to prove the $U$-bound 
\[ \int \eta \abs{f}^qd\mu \vcentcolon= \int \abs{x}^{\gamma q}N^{p-q\gamma}\abs{f}^qd\mu \lesssim \int \abs{\nabla_\gamma f}^qd\mu + \int \abs{f}^qd\mu \] 
by taking in the proof of Lemma \ref{lem:inglisuboundbetter} the inner product with $N\abs{\nabla_\gamma N}^{q-2}\nabla_\gamma N$ and developing
\begin{align*}
    p \int N^p \abs{\nabla_\gamma N}^q \abs{f}^qd\mu &= \int N \abs{\nabla_\gamma N}^{q-2} \nabla_\gamma N \cdot \nabla_\gamma \abs{f}^q d\mu \\ &- \int N\abs{\nabla_\gamma N}^{q-2} \nabla_\gamma N \cdot \nabla_\gamma(\abs{f}^q e^{-N^p}) d\xi,
\end{align*} 
as well as the Hardy inequality 
\begin{align*}
    (n + \gamma(q - 1) - 1) \int \frac{\abs{f}^q}{\abs{x}^{1-\gamma(q-1)}}d\mu &\lesssim \alpha^q \int \abs{\nabla_\bbG f}^q + \frac{1}{\alpha^{q/(q-1)}} \int \abs{x}^{\gamma q}\abs{f}^qd\mu 
\end{align*}
which passes through to
\begin{align*}
    \int \abs{f}^qd\mu &\lesssim \alpha^q \delta^{1-\gamma(q-1)} \int \abs{\nabla_\gamma f}^qd\mu + \left(\frac{1}{\alpha^{q/(q-1)}}\delta^{1-\gamma(q-1)} + \frac{1}{\delta^{\gamma q}}\right) \int \abs{x}^{\gamma q}\abs{f}^qd\mu \\
    &\lesssim \epsilon \int \abs{\nabla_\gamma f}^qd\mu + \frac{1}{\epsilon^\gamma} \int \abs{x}^{\gamma q} \abs{f}^qd\mu
\end{align*}
by taking $\omega = \abs{x}^{\gamma(q - 1) - 1}$ and $h = x$ in \eqref{q-gen-hardy} and choosing $\alpha = \epsilon^{(\gamma + 1)(q-1)/q^2}$ and $\delta = \epsilon^{1/q}$. (Note the argument for this inequality requires $1 - \gamma(q - 1) \geq 0$, but once proven for some $\gamma$ it extends to any larger $\gamma$ since $\abs{x} \lesssim \abs{x}^{1+\alpha} + \frac{1}{2}$ for any $\alpha > 0$.) Repeating previous arguments yields $R = \epsilon^{-(\gamma+1)/(p - \gamma q)}$ and therefore we obtain a $q$-super-Poincar\'e inequality \eqref{spi2} with 
\[ \beta_q(\epsilon) \simeq \exp\left(C\epsilon^{-p\sigma}\right) = \exp(C\epsilon^{-(\gamma+1)(p-1)/(p-\gamma-1)}), \]
provided $p > \gamma+1 \geq 2$. If $\gamma < 1$ and $2 > p > \gamma + 1$, in which case $q = \frac{p}{p-1} > 2$, then we obtain instead the $2$-super-Poincar\'e inequality \eqref{spi1} with 
\[ \beta_2(\epsilon) \simeq \exp(Ce^{-p\sigma}) = \exp(C\epsilon^{-p(\gamma+1)/(2(p-\gamma-1))}) \]
via the $U$-bound
\[ \int \eta f^2d\mu \vcentcolon= \int \abs{x}^{2\gamma} N^{2(p-\gamma-1)} f^2d\mu \lesssim \int \abs{\nabla_\gamma f}^2d\mu + \int f^2d\mu \]
obtained by following the proof of Lemma \ref{lem:inglisubound}.

Similarly in the Heisenberg-Greiner setting where the subgradient $\nabla_\zeta$, sublaplacian $\Delta_\zeta =\nabla_\zeta \cdot \nabla_\zeta$, and Kaplan norm $N$ are defined as in \cite[\S2.4~and~\S3.2]{d2005hardy}, it is again readily verified 
\[ \abs{\nabla_\zeta N} = \frac{\abs{x}^{2\zeta-1}}{N^{2\zeta-1}}, \quad \Delta_\zeta N = C(n, \zeta) \frac{\abs{x}^{4\zeta-2}}{N^{4\zeta-1}}, \quad \nabla_\zeta N \cdot \nabla_\zeta \abs{x} = \frac{\abs{x}^{4\zeta-1}}{N^{4\zeta-1}} \] 
for $\zeta \geq 1$ and $C(n, \gamma) = 2n + 2\gamma$. These estimates are identical to those appearing in the Grushin case at $\gamma = 2\zeta-1$ so that by analogy we obtain a $q$-super-Poincar\'e inequality \eqref{spi2} with 
\[ \beta_q(\epsilon) \simeq \exp\left(C\epsilon^{-p\sigma(\zeta)}\right) = \exp(C\epsilon^{-2\zeta(p-1)/(p-2\zeta)}) \] 
provided $p > 2\zeta \geq 2$. 

\begin{remark}
    It is perhaps unsurprising we recover the results that were obtained in the M\'etivier setting at $\gamma = \zeta = 1$ since the only ingredients in the proof the $U$-bound and the Hardy inequality are estimates of the form \eqref{grushinestimates}, all of which at $\gamma = \zeta = 1$ agree modulo constants with those appearing in the M\'etivier setting. 
\end{remark}

\begin{acknowledgements}
\textup{We would like to thank Andreas Malliaris for helpful discussions. We are also grateful to the Hausdorff Research Institute for Mathematics for their support and hospitality where part of the work on this paper was undertaken and some of these discussions took place. The author is supported by the President’s Ph.D. Scholarship of Imperial College London.}
\end{acknowledgements}

\printbibliography

@article{HZ09, 
  title={Coercive inequalities on metric measure spaces},
  author={Hebisch, W and Zegarli{\'n}ski, B},
  journal={Journal of Functional Analysis},
  volume={258},
  number={3},
  pages={814--851},
  year={2010},
  publisher={Elsevier}
}

@article{davies1998explicit,
  title={Explicit constant for Rellich inequalities in L\~{} p ( )},
  author={Davies, EB and Hinz, AM},
  journal={Mathematische Zeitschrift},
  volume={227},
  pages={511--524},
  year={1998},
  publisher={Springer}
}

@article{simon1983nonclassical,
  title={Nonclassical eigenvalue asymptotics},
  author={Simon, Barry},
  journal={Journal of functional analysis},
  volume={53},
  number={1},
  pages={84--98},
  year={1983},
  publisher={Elsevier}
}

@article{simon1983some,
  title={Some quantum operators with discrete spectrum but classically continuous spectrum},
  author={Simon, Barry},
  journal={Annals of physics},
  volume={146},
  number={1},
  pages={209--220},
  year={1983},
  publisher={Elsevier}
}

@article{aljahili2023non,
  title={Non-Classical Spectral Bounds for Schr{\"o}dinger Operators},
  author={Aljahili, Alaa and Laptev, Ari},
  journal={Journal of Mathematical Sciences},
  volume={270},
  number={6},
  pages={741--751},
  year={2023},
  publisher={Springer}
}

@book{bobkov2005entropy,
  title={Entropy bounds and isoperimetry},
  author={Bobkov, Serguei Germanovich and Zegarlinski, Boguslaw},
  year={2005},
  publisher={American Mathematical Soc.}
}

@article{barthe2007isoperimetry,
  title={Isoperimetry between exponential and Gaussian},
  author={Barthe, Franck and Cattiaux, Patrick and Roberto, Cyril},
  year={2007}
}

@article{d2005hardy,
  title={Hardy-type inequalities related to degenerate elliptic differential operators},
  author={D{'}Ambrosio, Lorenzo},
  journal={Annali della Scuola Normale Superiore di Pisa-Classe di Scienze},
  volume={4},
  number={3},
  pages={451--486},
  year={2005}
}

@article{CFZ22,
  title={Poincar$\backslash$'e inequalities on Carnot Groups and spectral gap of Schr$\backslash$" odinger operators},
  author={Chatzakou, Marianna and Federico, Serena and Zegarlinski, Boguslaw},
  journal={arXiv preprint arXiv:2211.09471},
  year={2022}
}

@article{Sim08,
  title={Schrodinger operators with purely discrete spectrum},
  author={Simon, Barry},
  journal={Methods of Functional Analysis and Topology},
  volume={15},
  number={1},
  pages={61--66},
  year={2009}
}

@article{monti2001surface,
  title={Surface measures in Carnot-Carath{\'e}odory spaces},
  author={Monti, Roberto and Cassano, Francesco Serra},
  journal={Calculus of Variations and Partial Differential Equations},
  volume={13},
  pages={339--376},
  year={2001},
  publisher={Springer-Verlag}
}

@article{kaplan1980fundamental,
  title={Fundamental solutions for a class of hypoelliptic PDE generated by composition of quadratic forms},
  author={Kaplan, Aroldo},
  journal={Transactions of the American Mathematical Society},
  volume={258},
  number={1},
  pages={147--153},
  year={1980}
}

@article{BC22,
  title={Schr{\"o}dinger operators on Lie groups with purely discrete spectrum},
  author={Bruno, Tommaso and Calzi, Mattia},
  journal={Advances in Mathematics},
  volume={404},
  pages={108444},
  year={2022},
  publisher={Elsevier}
}

@article{BDZ21,
  title={Coercive inequalities in higher-dimensional anisotropic heisenberg group},
  author={Dagher, Esther Bou and Zegarli{\'n}ski, Bogus{\l}aw},
  journal={Analysis and Mathematical Physics},
  volume={12},
  number={1},
  pages={3},
  year={2022},
  publisher={Springer}
}

@article{IP09,
  title={Logarithmic Sobolev inequalities for infinite dimensional H{\"o}rmander type generators on the Heisenberg group},
  author={Inglis, James and Papageorgiou, Ioannis},
  journal={Potential Analysis},
  volume={31},
  number={1},
  pages={79--102},
  year={2009},
  publisher={Springer}
}

@phdthesis{Ing10,
  title={Coercive inequalities for generators of H{\"o}rmander type},
  author={Inglis, James Devear},
  year={2010},
  school={Department of Mathematics, Imperial College London}
}

@article{inglis2012spectral,
  title={Spectral inequalities for operators on H-type groups},
  author={Inglis, James D},
  journal={Journal of Spectral Theory},
  volume={2},
  number={1},
  pages={79--105},
  year={2012}
}

@article{inglis2011u,
  title={From U-bounds to isoperimetry with applications to H-type groups},
  author={Inglis, J and Kontis, V and Zegarli{\'n}ski, B},
  journal={Journal of Functional Analysis},
  volume={260},
  number={1},
  pages={76--116},
  year={2011},
  publisher={Elsevier}
}

@article{varopoulos1993analysis,
  title={Analysis and geometry on groups},
  author={Varopoulos, Nicholas T and Saloff-Coste, Laurent and Coulhon, Thierry},
  journal={(No Title)},
  year={1993},
  publisher={Cambridge university press}
}

@article{Amb04,
  title={Some Hardy inequalities on the Heisenberg group},
  author={D'Ambrosio, Lorenzo},
  journal={Differential Equations},
  volume={40},
  number={4},
  pages={552--564},
  year={2004},
  publisher={Springer}
}

@article{wang2008super,
  title={From super Poincar{\'e} to weighted log-Sobolev and entropy-cost inequalities},
  author={Wang, Feng-Yu},
  journal={Journal de math{\'e}matiques pures et appliqu{\'e}es},
  volume={90},
  number={3},
  pages={270--285},
  year={2008},
  publisher={Elsevier}
}

@article{dagher2021coercive,
  title={Coercive Inequalities and U-Bounds},
  author={Dagher, Esther Bou and Zegarlinski, Boguslaw},
  journal={arXiv preprint arXiv:2105.01759},
  year={2021}
}

@article{papageorgiou2009logarithmic,
  title={The logarithmic Sobolev inequality for Gibbs measures on infinite product of Heisenberg groups},
  author={Papageorgiou, Ioannis},
  journal={arXiv preprint arXiv:0901.1482},
  year={2009}
}

@article{dagher2022spectral,
  title={Spectral Gap Inequalities on Nilpotent Lie Groups in Infinite Dimensions},
  author={Dagher, Esther Bou and Qiu, Yaozhong and Zegarlinski, Boguslaw and Zhang, Mengchun},
  journal={arXiv preprint arXiv:2211.14188},
  year={2022}
}

@article{konstantopoulos2019log,
  title={The log-Sobolev inequality for spin systems of higher order interactions},
  author={Konstantopoulos, Takis and Papageorgiou, Ioannis},
  journal={arXiv preprint arXiv:1906.11980},
  year={2019}
}

@article{papageorgiou2018log,
  title={The log-Sobolev inequality with quadratic interactions},
  author={Papageorgiou, Ioannis},
  journal={Journal of Mathematical Physics},
  volume={59},
  number={8},
  year={2018},
  publisher={AIP Publishing}
}

@article{inglis2019log,
  title={Log-Sobolev inequalities for infinite-dimensional Gibbs measures with non-quadratic interactions},
  author={Inglis, James and Papageorgiou, Ioannis and others},
  journal={Markov Processes and Related Fields},
  volume={25},
  number={5},
  pages={879},
  year={2019}
}

@inproceedings{latala2000between,
  title={Between sobolev and poincar{\'e}},
  author={Lata{\l}a, Rafa{\l} and Oleszkiewicz, Krzysztof},
  booktitle={Geometric Aspects of Functional Analysis: Israel Seminar 1996--2000},
  pages={147--168},
  year={2000},
  organization={Springer}
}

@article{BC17,
  title={Weighted sub-Laplacians on M{\'e}tivier groups: Essential self-adjointness and spectrum},
  author={Bruno, Tommaso and Calzi, Mattia},
  journal={Proceedings of the American Mathematical Society},
  volume={145},
  number={8},
  pages={3579--3594},
  year={2017}
}

@article{RS17,
  title={Hardy and Rellich inequalities, identities, and sharp remainders on homogeneous groups},
  author={Ruzhansky, Michael and Suragan, Durvudkhan},
  journal={Advances in Mathematics},
  volume={317},
  pages={799--822},
  year={2017},
  publisher={Elsevier}
}

@article{RS17a,
  title={On horizontal Hardy, Rellich, Caffarelli--Kohn--Nirenberg and p-sub-Laplacian inequalities on stratified groups},
  author={Ruzhansky, Michael and Suragan, Durvudkhan},
  journal={Journal of Differential Equations},
  volume={262},
  number={3},
  pages={1799--1821},
  year={2017},
  publisher={Elsevier}
}

@article{RZ21,
  title={Bakry-Emery Calculus For Diffusion With Additional Multiplicative Term},
  author={Roberto, Cyril and Zegarlinski, Boguslaw},
  journal={arXiv preprint arXiv:2102.10633},
  year={2021}
}

@book{RS19,
  title={Hardy inequalities on homogeneous groups: 100 years of Hardy inequalities},
  author={Ruzhansky, Michael and Suragan, Durvudkhan},
  year={2019},
  publisher={Springer Nature}
}

@book{BLU07,
  title={Stratified Lie groups and potential theory for their sub-Laplacians},
  author={Bonfiglioli, Andrea and Lanconelli, Ermanno and Uguzzoni, Francesco},
  year={2007},
  publisher={Springer Science \& Business Media}
}

@book{BGL14,
  title={Analysis and geometry of Markov diffusion operators},
  author={Bakry, Dominique and Gentil, Ivan and Ledoux, Michel and others},
  volume={103},
  year={2014},
  publisher={Springer}
}

@article{Amb05,
  title={Hardy-type inequalities related to degenerate elliptic differential operators},
  author={D'Ambrosio, Lorenzo},
  journal={Annali della Scuola Normale Superiore di Pisa-Classe di Scienze},
  volume={4},
  number={3},
  pages={451--486},
  year={2005}
}

@article{monti2000some,
  title={Some properties of Carnot-Carath{\'e}odory balls in the Heisenberg group},
  author={Monti, Roberto},
  journal={Atti della Accademia Nazionale dei Lincei. Classe di Scienze Fisiche, Matematiche e Naturali. Rendiconti Lincei. Matematica e Applicazioni},
  volume={11},
  number={3},
  pages={155--167},
  year={2000},
  publisher={Accademia Nazionale dei Lincei}
}

@book{Wan06,
  title={Functional inequalities Markov semigroups and spectral theory},
  author={Wang, Fengyu},
  year={2006},
  publisher={Elsevier}
}

@article{cattiaux2009lyapunov,
  title={Lyapunov conditions for super Poincar{\'e} inequalities},
  author={Cattiaux, Patrick and Guillin, Arnaud and Wang, Feng-Yu and Wu, Liming},
  journal={Journal of Functional Analysis},
  volume={256},
  number={6},
  pages={1821--1841},
  year={2009},
  publisher={Elsevier}
}

@article{Amb04a,
  title={Hardy inequalities related to Grushin type operators},
  author={D'Ambrosio, Lorenzo},
  journal={Proceedings of the American Mathematical Society},
  pages={725--734},
  year={2004},
  publisher={JSTOR}
}

@article{CFZ20,
  title={q-Poincar{\'e} Inequalities on Carnot Groups with Filiform Type Lie Algebra},
  author={Chatzakou, Marianna and Federico, Serena and Zegarlinski, Boguslaw},
  journal={Potential Analysis},
  pages={1--26},
  year={2023},
  publisher={Springer}
}

@article{RZ22,
  title={Hypercontractivity for Markov semi-groups},
  author={Roberto, C and Zegarlinski, B},
  journal={Journal of Functional Analysis},
  volume={282},
  number={12},
  pages={109439},
  year={2022},
  publisher={Elsevier}
}

@article{Qiu22,
  title={On weighted contractivity and $L^2$-Hardy inequalities},
  author={Qiu, Yaozhong W},
  journal={arXiv preprint arXiv:2209.08304},
  year={2022}
}

@article{Wan00,
  title={Functional inequalities for empty essential spectrum},
  author={Wang, Feng-Yu},
  journal={Journal of Functional Analysis},
  volume={170},
  number={1},
  pages={219--245},
  year={2000},
  publisher={Elsevier}
}

@article{Wan02,
  title={Functional inequalities and spectrum estimates: the infinite measure case},
  author={Wang, Feng-Yu},
  journal={Journal of Functional Analysis},
  volume={194},
  number={2},
  pages={288--310},
  year={2002},
  publisher={Elsevier}
}

\end{document}